\def\serieslogo@{}
\def\@setcopyright{}
\newtheorem{theorem}{Theorem}[section]
\newtheorem{algorithm}{Algorithm}[section]
\newtheorem{lemma}[theorem]{Lemma}
\newtheorem{corollary}[theorem]{Corollary}
\theoremstyle{definition}
\newtheorem{remark}{Remark}[section]
\numberwithin{equation}{section}
\newcommand{\cell}{I}  
\renewcommand{\i}{\ifmmode\mathit{\mathchar"7010 }\else\char"10 \fi}
\renewcommand{\j}{\ifmmode\mathit{\mathchar"7011 }\else\char"11 \fi}
\newcommand{\R}{\mathbb{R}}
\newcommand{\N}{\mathbb{N}}
\newcommand{\Ik}{\mathbbm{1}_{\!\cell_k}}
\newcommand{\Z}{\mathbb{Z}}
\newcommand{\sgn}{{\rm sgn}}
\newcommand{\hf}{{\unitfrac{1}{2}}}
\newcommand{\thf}{{\unitfrac{3}{2}}}
\newcommand{\fhf}{{\unitfrac{5}{2}}}
\newcommand{\iphf}{{i+\hf}}
\newcommand{\imhf}{{i-\hf}}
\newcommand{\jphf}{{j+\hf}}
\newcommand{\Del}[2]{D_{[\,#1,#1+#2]}}           
\newcommand{\Delii}[2]{D_{[\,#1,#2]}}       
\newcommand{\del}[2]{d_{[\,#1,#1+#2]}}           
\newcommand{\delii}[2]{d_{[\,#1,#2]}}       
\newcommand{\prodii}[2]{\mathop {\cancel{\prod}}_{#1}^{#2}}
\newcommand{\Cu}[2]{C_{\,#1,#2}}
\newcommand{\cu}[2]{c_{\,#1,#2}} 
\newcommand{\rr}{\ell}  
\newcommand{\rs}{n}
\newcommand{\jumplus}[1]{v^+_{#1}}
\newcommand{\juminus}[1]{v^{-}_{#1}} 
\newcommand{\vplus}{\jumplus{\hf}}
\newcommand{\vminus}{\juminus{\hf}}
\renewcommand{\v}{f}
\newcommand{\V}{F}
\newcommand{\hIi}{|\cell_i|}
\renewcommand{\epsilon}{\varepsilon}
\renewcommand{\phi}{\varphi}
\newcommand{\avg}[1]{\overline{#1}}
\newbox\bokstav
\newdimen\hoyde
\def\bgl{{\hbox{$\left\lbrack\vbox to 8.5pt{}\right.\nOspace$}}}
\def\Bgl{{\hbox{$\left\lbrack\vbox to 11.5pt{}\right.\nOspace$}}}
\def\bggl{{\hbox{$\left\lbrack\vbox to 14.5pt{}\right.\nOspace$}}}
\def\Bggl{{\hbox{$\left\lbrack\vbox to 17.5pt{}\right.\nOspace$}}}
\def\bgr{{\hbox{$\left\rbrack\vbox to 8.5pt{}\right.\nOspace$}}}
\def\Bgr{{\hbox{$\left\rbrack\vbox to 11.5pt{}\right.\nOspace$}}}
\def\bggr{{\hbox{$\left\rbrack\vbox to 14.5pt{}\right.\nOspace$}}}
\def\Bggr{{\hbox{$\left\rbrack\vbox to 17.5pt{}\right.\nOspace$}}}
\def\nOspace{\nulldelimiterspace=0pt \mOth}
\def\mOth{\mathsurround=0pt}
\def\Ljmp{\mathopen{\lbrack\!\lbrack}}
\def\Rjmp{\mathclose{\rbrack\!\rbrack}}
\def\bgLjmp{\mathopen{\bgl\mskip-6mu\bgl}}
\def\bgRjmp{\mathclose{\bgr\mskip-6mu\bgr}}
\def\BgLjmp{\mathopen{\Bgl\!\!\Bgl}}
\def\BgRjmp{\mathclose{\Bgr\!\!\Bgr}}
\def\bggLjmp{\mathopen{\bggl\!\!\bggl}}
\def\bggRjmp{\mathclose{\bggr\!\!\bggr}}
\def\BggLjmp{\mathopen{\Bggl\!\!\Bggl}}
\def\BggRjmp{\mathclose{\Bggr\!\!\Bggr}}
\def\jmp#1{
\setbox\bokstav=\hbox{$ \left. #1\right. $}
\hoyde=\ht\bokstav 
\advance\hoyde by \dp\bokstav
\hbox{$
        \ifinner
                \ifdim\hoyde<10pt
                   \Ljmp #1 \Rjmp%
                \else
                   \ifdim\hoyde <11pt
                      \Ljmp #1 \Rjmp%
                   \else
                      \ifdim\hoyde <14pt
                          \bgLjmp #1 \bgRjmp%
                      \else
                          \ifdim\hoyde <20pt
                             \BgLjmp #1 \BgRjmp%
                          \else
                              \bggLjmp #1 \bggRjmp%
                          \fi
                      \fi
                   \fi
                \fi
        \else
                \ifdim\hoyde<8.5pt
                   \Ljmp #1 \Rjmp%
                \else
                   \ifdim\hoyde <11.5pt
                      \bgLjmp #1 \bgRjmp%
                   \else
                      \ifdim\hoyde <14.5pt
                          \BgLjmp #1 \BgRjmp%
                      \else
                          \ifdim\hoyde <17.5pt
                             \bggLjmp #1 \bggRjmp%
                          \else
                              \BggLjmp #1 \BggRjmp%
                          \fi
                      \fi
                   \fi
            \fi
        \fi
$}
}
\begin{document}

\title[ENO reconstruction and ENO interpolation are stable]{ENO reconstruction and ENO interpolation are stable}

\author[Ulrik S. Fjordholm]{Ulrik S. Fjordholm}\address[Ulrik S.Fjordholm]{\newline Seminar for Applied Mathematics, ETH Z\"urich \newline HG J 48, R\"amistrasse 101, Z\"urich, Switzerland.} \email[]{ulrikf@sam.math.ethz.ch}

\author[Siddhartha Mishra]{Siddhartha Mishra} \address[Siddhartha
Mishra]{\newline Seminar for Applied Mathematics, ETH Z\"urich \newline HG G 57.2, R\"amistrasse 101, Z\"urich, Switzerland.} \email[]{smishra@sam.math.ethz.ch}

\author[Eitan Tadmor]{Eitan Tadmor}\address[Eitan Tadmor]{
\newline Department of Mathematics
\newline Center of Scientific Computation and Mathematical Modeling (CSCAMM) 
\newline Institute for Physical sciences and Technology (IPST)
\newline University of Maryland
\newline MD 20742-4015, USA}
\email[]{tadmor@cscamm.umd.edu}

\thanks{SM thanks Prof. Mike Floater of CMA, Oslo for useful discussions. The research of ET was supported  by grants from National Science Foundation, DMS\#10-08397  and the Office of Naval Research, ONR\#N000140910385.} 
\subjclass{65D05, 65M12}
\keywords{Newton interpolation, adaptivity, ENO reconstruction, sign property}

\date{June 3, 2011} 
\maketitle

\begin{abstract}
We prove stability estimates for the ENO reconstruction and ENO interpolation procedures. In particular, we show that the jump of the  reconstructed ENO pointvalues at each cell interface has the same sign as the jump of the underlying cell averages  across that interface. We also prove that the jump of the  reconstructed values  can be upper-bounded in terms of the jump of the underlying cell averages. Similar sign properties hold for the ENO interpolation procedure. These estimates, which are shown to hold for ENO reconstruction and interpolation of arbitrary order of accuracy and on non-uniform meshes, indicate a remarkable rigidity of the piecewise-polynomial ENO procedure.
\end{abstract}

\tableofcontents

\section{Introduction and statement of main results}
The acronym ENO in the title of this paper stands for ``Essentially Non-Oscillatory", and it refers to a \emph{reconstruction} procedure, which generates a piecewise polynomial approximation of a function from a given set of its cell averages. The essence of the ENO procedure, which was introduced by Harten et. al. in \cite{HEOC87}, is its ability to accurately recover discontinuous functions. The starting point is a collection  of cell averages $\{\overline{v}_i\}_{i\in\Z}$ over consecutive intervals $\cell_i = [x_\imhf, x_\iphf)$,
\begin{equation}\label{eq:cellavg}
\overline{v}_i := \frac{1}{\hIi}\int_{\cell_i}v(x) dx,
\end{equation}
from which one can form the piecewise constant approximation of the underlying function $v(x)$,
\[
\mathcal{A}v(x) := \sum_{k} \overline{v}_k \Ik (x), \qquad 
\Ik (x)=\left\{\begin{array}{ll}1 & \text{if } x\in \cell_k,\\
0 & \text{if } x\notin \cell_k.\end{array}\right.
\]
But the averaging operator $\mathcal{A}v(x)$ is limited to first order accuracy, whether $v$ is smooth or not; for example, if $v$ has bounded variation then $\|v-\mathcal{A}v\|_{L^1} = {\mathcal O}(h)$. The purpose of the ENO procedure (abbreviated by $\mathcal{R}$) is to reconstruct a higher order  approximation  of $v(x)$ from its given cell averages,
\begin{equation}\label{eq:eno}
\text{\bf ENO:} \qquad \mathcal{A}v(x) = \sum_k \overline{v}_k \Ik(x) \quad \mapsto \quad \mathcal{R}\mathcal{A}v(x) := \sum_k f_k(x) \Ik(x).
\end{equation}
Here, $\v_k(x)$ are polynomials of degree $p-1$ such that the piecewise-polynomial ENO reconstruction $\mathcal{RA}v(x)$ satisfies the following two essential properties.
\begin{description}
\item[Accuracy] First, it is an approximation of $v(x)$ of order $p$ in the sense that 
\begin{equation}\label{eq:acc}
\mathcal{RA}v(x) = v(x) + {\mathcal O}(h^{p}),
\end{equation}
where $h = \max_i |I_i|$. Typically, the requirement for accuracy is sought whenever $v(\cdot)$ is sufficiently smooth in a neighborhood of $x$. Here, however, (\ref{eq:acc}) is also sought at isolated points of jump discontinuities. Thus, if we let $v(x_{i+\hf}+)$ and $v(x_{i+\hf}-)$ denote the point-values of $v(x)$ at the left and right of the interface at $x_{i+\hf}$, then (\ref{eq:acc}) requires that the corresponding reconstructed point-values, $\juminus{i+\hf}:= \mathcal{RA}v(x_{i+\hf}-)=\v_i(x_{i+\hf})$ and $\jumplus{i+\hf}:=\mathcal{RA}v(x_{i+\hf}+)= \v_{i+1}(x_{i+\hf})$, satisfy
\[
|\juminus{i+\hf}-v(x_{i+\hf}-)| + |\jumplus{i+\hf}- v(x_{i+\hf}+)| = \mathcal{O}(h^{p}).
\]
To address this requirement of accuracy, the $\v_i$'s are constructed from neighboring cell averages $\{\overline{v}_{i+j}\}_{j=k}^{k+p-1}$ for some $k\in \{-p+1,\dots, 0\}$. The key point is to choose an \emph{adaptive}   stencil,
\[
i \mapsto \{\overline{v}_{i+k}, \cdots, \overline{v}_i, \cdots,\overline{v}_{i+k+p-1}\},
\]
based on  a data-dependent shift $k=k(i)$. This enables the essential non-oscillatory property (\ref{eq:acc}),  while making the ENO procedure \emph{essentially nonlinear}.\newline

\item[Conservation]
The second property sought in the ENO reconstruction is that the piecewise-polynomial ENO approximation be \emph{conservative}, in sense of conserving the original cell averages,
\begin{equation}
\label{eq:interp}
\frac{1}{\hIi}\int_{{I}_i} \mathcal{R}\mathcal{A}v(x) dx = \overline{v}_i.
\end{equation}
The conservative property enables us to recast the ENO procedure in an equivalent formulation of nonlinear \emph{interpolation}. To this end, let $\displaystyle V(x) := \int_{-\infty}^x v(s) ds$ denote the primitive of $v(x)$. The given cell averages $\{\overline{v}_i\}$ now give rise to a set of point-values $\{V_{j+\hf}\}_{j\in\Z}$, 
\begin{equation}\label{eq:primitive}
\begin{aligned}
V_{\jphf} := \int_{-\infty}^{x_{\jphf}} v(s) ds &= \sum_{k=-\infty}^{j} \int_{x_{k-\hf}}^{x_{x+\hf}} v(s) ds = \sum_{k=-\infty}^j |\cell_k| \overline{v}_k.
\end{aligned} 
\end{equation}
A second-order approximation of these point-values is given by the piecewise linear interpolant
${\mathcal L}V(x):=\sum_k \frac{1}{|\cell_k|}\left( V_{k-\hf}(x_{k+\hf}-x) + V_{k+\hf}(x-x_{k-\hf})\right) \Ik(x)$.
The  ENO approximation, $\sum_k \V_k(x) \Ik(x)$, is  a higher-order accurate piecewise-polynomial interpolant,
\begin{equation}\label{eq:enointerpolant}
\text{\bf ENO:} \qquad {\mathcal L}V(x) \quad \mapsto \quad  {\mathcal R}{\mathcal L}V(x):=\sum_k \V_k(x) \Ik(x).
\end{equation}
It interpolates the given data at the nodes, $\V_i(x_{i\pm \hf})=V_{i\pm \hf}$, and it recovers $V(x)$ to high-order accuracy at the interior of the cells, ${\mathcal R}\mathcal{L}V(x)=V(x)+{\mathcal O}(h^{p+1})$. Now, let $\V_i(x)$ be the unique $p$-th order polynomial interpolating the $p+1$ pointvalues $V_{i+r}, \dots, V_{i+r+p}$ for some shift $r$ which is yet to be determined. Then, it is a simple consequence of \eqref{eq:primitive} that $\v$ satisfies \eqref{eq:acc} and \eqref{eq:interp} and that
$$
\V_i(x)=V_{i-\hf}+ \int_{x_{i-\hf}}^x \v_i(s)ds.
$$
In this manner, ENO reconstruction of cell averages is equivalent to ENO interpolation of the pointvalues of its primitive. We shall travel back and forth between these two ENO formulations.
\end{description}

\subsection{ENO reconstruction}
 When the underlying data is sufficiently smooth, the accuracy requirements can be met by interpolating the primitive $V$ on \emph{any} set of $p+1$ point-values 
$$
 \{V_{i+r}, \dots, V_{i-\hf}, V_{i+\hf}, \dots, V_{i+r+p}\}.
$$
Here, $r$ is the (left) \emph{offset} of the interpolation stencil, which is indexed at \emph{half-integers}, to match the cell interfaces. To satisfy the conservation property (\ref{eq:interp}), the stencil of interpolation must include $V_{i-\hf}$ and $V_{i+\hf}$. There are $p$ such  stencils, ranging from the leftmost stencil corresponding to an offset $r=-p+\hf$ to the rightmost stencil corresponding to an offset of $r=-\hf$. 
Since we are interested in approximation of piecewise smooth functions,  we need to choose a carefully shifted stencil, in order to avoid spurious oscillations. The main idea behind the ENO procedure is the use of a   stencil with a data-dependent  offset, $r=r(i)$, which is \emph{adapted to the smoothness of the data}. The choice of ENO stencil is accomplished in  an iterative manner, based on divided differences of the data.
\begin{algorithm}[ENO reconstruction algorithm: selection of ENO stencil]\label{alg:eno}
\mbox{ }
\newline
Let point values of the primitive $V_{i-p+\hf}, \dots, V_{i+p-\hf}$ be given, e.g., \eqref{eq:primitive}.
\begin{itemize}
\item Set $r_1 = -\hf$.
\item For each $j=1, \dots, p-1$, do: 
\[
\begin{cases}
 {\rm if} \quad \left|V\left[x_{i+r_j-1}, \dots, x_{i+r_j+j}\right]\right| < \left|V\left[x_{i+r_j}, \dots, x_{i+r_j+j+1}\right]\right| & \mapsto \quad {\rm set} \quad r_{j+1} = r_j-1,\\ 
 {\rm otherwise} &  \mapsto \quad {\rm set} \quad r_{j+1}=r_j.
\end{cases}
\]
\item Set $\V_i(x)$ as the interpolant of $V$ over the stencil $\{V_{i+k}\}_{k=r_p}^{r_p+p}$.
\item Compute $\v_i(x):=\V_i'(x)$.
\end{itemize}
\end{algorithm}
The divided differences $V[x_k,\dots,x_{k+j}]$ are a good measure of the $j$th order of smoothness of $V(x)$.  Thus, the ENO procedure is based on data-dependent stencils which are chosen in the \emph{direction of smoothness}, in the sense of preferring the smallest divided differences.

The ENO reconstruction procedure was introduced in 1987  by Harten et.~ al.~ \cite{HEOC87} in the context  of accurate simulations for piecewise smooth solutions of nonlinear conservation laws. Since then, the ENO procedure and its extensions, \cite{SO89,Har89,Shu90,Har91,Har93,Har94}, have been used with a considerable success in Computational Fluid Dynamics; we refer to the review article of Shu \cite{Shu97} and the references therein. Moreover, ENO and its various extensions, in particular, with subcell resolution scheme (ENO-SR), \cite{Har89}, have been applied to problems in data compression and image processing in \cite{Har98,AACD02,CZ02,Mat02,BCDS03,CDM03,ACDN05,ACDNM08} and references therein.

There are only a few rigorous results about the global accuracy of the ENO procedure. In \cite{ACDN05}, the authors proved the second-order accuracy of ENO-SR reconstruction of piecewise-smooth $C^2$ data. Multi-dimensional global accuracy results for the so-called ENO-EA method were obtained in \cite{ACDNM08}. Despite the extensive literature on the construction and implementation of ENO method and its variants for the last 25 years, we are not aware of any global, mesh independent, \emph{stability results}. This brings us to the main result of this paper, stating the  stability of the ENO reconstruction procedure in terms of the following sign property.


\begin{theorem}[The sign property]\label{thm:enosign}		
Fix an integer $p>1$. Given the cell averages $\{\overline{v}_i\}$, let ${\mathcal R}{\mathcal A}v(x)$ be the $p$-th order ENO reconstruction of these averages, as outlined in Algorithm \ref{alg:eno},
\[
{\mathcal R}{\mathcal A}v(x) = \sum_k \v_k(x)\Ik(x), \qquad {\rm deg}\,\v_k(x)\leq p-1.
\]
Let $\jumplus{i+\hf}:={\mathcal R}{\mathcal A}v(x_{\iphf}+)$ and $\juminus{i+\hf}:= {\mathcal R}{\mathcal A}v(x_{\iphf}-)$ denote left and right reconstructed point-values at the cell interface $x_{i+\hf}$. Then the following sign property holds at all interfaces:
\begin{equation}\label{eq:signpres}	
\begin{cases}
\ {\rm if} \quad  \overline{v}_{i+1} - \overline{v}_i \geq 0 & \  {\rm then} \quad  \   \jumplus{i+\hf} - \juminus{i+\hf}\geq 0;\\
\ {\rm if} \quad  \overline{v}_{i+1} - \overline{v}_i \leq 0 & \  {\rm then} \quad \    \jumplus{i+\hf} - \juminus{i+\hf}\leq 0.
\end{cases}
\end{equation}
In particular, if $\overline{v}_{i+1} = \overline{v}_i$ then the ENO reconstruction is continuous across the interface, $\jumplus{i+\hf} = \juminus{i+\hf}$. Moreover, there is a constant $\sf{C}_p$, depending only on $p$ and on the mesh-ratio ${|\cell_{j+1}|}/{|\cell_j|}$ of neighboring grid cells, such that
\begin{equation}
\label{eq:jmpub}
0\leq \frac{\jumplus{i+\hf} - \juminus{i+\hf}}{\overline{v}_{i+1} - \overline{v}_i} \leq {\sf C}_p.
\end{equation}
\end{theorem}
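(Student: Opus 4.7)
The plan is to work in the equivalent primitive (interpolation) formulation. Since $\v_i = \V_i'$, the jump at the interface reads
\[
\jumplus{i+\hf} - \juminus{i+\hf} = \V_{i+1}'(x_{\iphf}) - \V_i'(x_{\iphf}),
\]
where $\V_i$ interpolates $V$ on the ENO stencil chosen for cell $\cell_i$ by Algorithm~\ref{alg:eno}, and similarly for $\V_{i+1}$. Both stencils are built by a nested procedure: at step~$j$, one adjoins exactly one new node to the previous stencil. Writing each $\V_i$ in Newton form along its nested ENO stencil gives a telescoping decomposition into a sum of correction polynomials, each proportional to a higher-order divided difference of $V$. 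The base terms are the linear interpolants through $\{x_{\imhf}, x_{\iphf}\}$ and $\{x_{\iphf}, x_{\ipthf}\}$, respectively, whose derivatives are exactly $\overline{v}_i$ and $\overline{v}_{i+1}$. Hence
\[
\jumplus{i+\hf} - \juminus{i+\hf} = (\overline{v}_{i+1} - \overline{v}_i) + \mathcal{R}_p,
\]
and the task is to estimate the remainder $\mathcal{R}_p$.

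First I would handle the extreme cases $p=1$ (trivial, since the reconstruction is piecewise constant) and $p=2$ (a short case-check over the two admissible stencils on each side, four cases total) as a base. For the inductive step from order $p$ to order $p+1$, one adds at each new order the single Newton correction introduced by Algorithm~\ref{alg:eno} for each of $\V_i$ and $\V_{i+1}$. The ENO selection rule --- that the algorithm always prefers the smaller-magnitude divided difference --- is the essential input: it should imply that the newly added correction contributes at $x_{\iphf}$ a quantity whose magnitude is dominated, up to a constant depending only on $p$ and the mesh ratio, by $|\overline{v}_{i+1} - \overline{v}_i|$, and whose sign is compatible with that of $\overline{v}_{i+1} - \overline{v}_i$.

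The hard part is the coupling between the two adaptive stencils on either side of the interface. At every new order, each of the two stencils independently extends left or right, giving four sub-cases per step, and $\V_i, \V_{i+1}$ need not share any common nodes beyond $x_{\iphf}$. The right inductive invariant is therefore unlikely to be simply ``the sign of $\jumplus{i+\hf} - \juminus{i+\hf}$''; more plausibly one must carry along a stronger statement about all intermediate polynomials (of every degree $j \leq p$) interpolating on any two admissible stencils that meet at the interface $x_{\iphf}$, which can be closed under the algorithm's branching. Identifying this strengthened invariant and verifying that the divided-difference comparison in Algorithm~\ref{alg:eno} preserves it is, in my view, the main obstacle. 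Once it is in place, the upper bound \eqref{eq:jmpub} follows by tracking the multiplicative factors arising from ratios of the $(x-x_k)$ products evaluated at $x_{\iphf}$ in the Newton form, which depend only on $p$ and on the mesh ratio $|\cell_{j+1}|/|\cell_j|$.
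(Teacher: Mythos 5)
Your setup (passing to the primitive, writing each $\V_i$ in Newton form along its nested ENO stencil, inducting on $p$ via the ENO comparison rule) matches the paper's, but the decomposition you propose to induct on is the wrong one, and the step where you rely on it would fail. You telescope \emph{across orders} $j=1,\dots,p$: the jump is written as $(\overline{v}_{i+1}-\overline{v}_i)+\mathcal{R}_p$, where the $j$-th contribution to $\mathcal{R}_p$ is the difference of the two newly added Newton corrections evaluated at $x_{\iphf}$, and you claim each such contribution has a sign compatible with $\overline{v}_{i+1}-\overline{v}_i$. That claim is false in general. The difference of the degree-$j$ partial sums of the two Newton forms is the difference of the two degree-$j$ ENO interpolants, and (by the very identity the paper proves) it equals a sum of nonnegative multiples of $\overline{v}_{i+1}-\overline{v}_i$; the $j$-th correction is the \emph{difference of two such consecutive quantities}, which can have either sign (already at $j=2$ the order-two partial-sum jump can be smaller than $\overline{v}_{i+1}-\overline{v}_i$, making that correction negative). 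Moreover, since the sharp constant ${\sf C}_p$ exceeds $1$ for $p\geq 2$, no magnitude bound of the form $|\mathcal{R}_p|\leq c\,|\overline{v}_{i+1}-\overline{v}_i|$ can by itself yield the sign property. You correctly flag that ``the right inductive invariant'' is the main obstacle, but you leave it unidentified, so the proposal is a plan with its central step missing.

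The missing idea is to telescope \emph{across stencils} rather than across orders. Since consecutive ENO stencils are nested, the offsets satisfy $r_p\leq s_p$, and the difference of the two final interpolants can be written as a telescoping sum over the one-cell-shifted family $F^{\{r\}},\ r=r_p,\dots,s_p+1$, where each consecutive difference $F^{\{r+1\}}-F^{\{r\}}$ vanishes at the $p$ common nodes and hence is a \emph{single} term: a $(p+1)$-th divided difference times an explicit polynomial. Differentiating and evaluating at $x_{\hf}$ gives
\begin{equation*}
\vplus-\vminus \;=\; \sum_{r=r_p}^{s_p}\Del{r}{p+1}\,(x_{r+p+1}-x_r)\prodii{m=0}{p-1}\bigl(x_\hf-x_{r+m+1}\bigr),
\end{equation*}
where the grid factor has the explicit sign $(-1)^{r+p-\hf}$. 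The ``strengthened invariant'' you were looking for is then not a statement about intermediate polynomials but about divided differences: one proves by induction on $p$ that $(-1)^{r+p-\hf}\Del{r}{p+1}$ has the same sign as $\Delii{-\hf}{\thf}$ for \emph{every} half-integer $r$ in the range $[r_p,s_p]$. The interior of the range is handled by the plain divided-difference recursion (a nonnegative combination of two terms of the correct sign), and only the two endpoints $r_{p+1}<r_p$ and $s_{p+1}\geq s_p$ require the ENO comparison $|\Delii{r_p-1}{r_p+p}|<|\Del{r_p}{p+1}|$, resp.\ $|\Delii{s_p+1}{s_p+p+2}|\leq|\Del{s_p}{p+1}|$. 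The upper bound \eqref{eq:jmpub} then follows from the same identity by a parallel induction producing recursive constants $\Cu{r}{p}$ that bound $\Del{r}{p+1}/\Delii{-\hf}{\thf}$ in terms of the mesh ratios, summed over all $p$ admissible offsets. Without the cross-stencil identity, neither the sign argument nor the bound goes through.
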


The sign property tells us that at each cell interface, the jump of the reconstructed ENO pointvalues cannot have an opposite sign  to the jump in the underlying cell averages. The sign property is illustrated in Figure \ref{fig:eno}, which shows a third-, fourth- and fifth-order ENO reconstruction of randomly chosen cell averages. Even though the reconstructed polynomial may have large variations within each cell, its jumps at cell interfaces always have the same sign as the jumps of the cell averages. Moreover, the relative size of these jumps is uniformly bounded. We remark that the  inequality on the left-hand side of \eqref{eq:jmpub} is a direct consequence of the sign property \eqref{eq:signpres}.

\begin{figure}[h]
\centering
\includegraphics[width=0.32\linewidth]{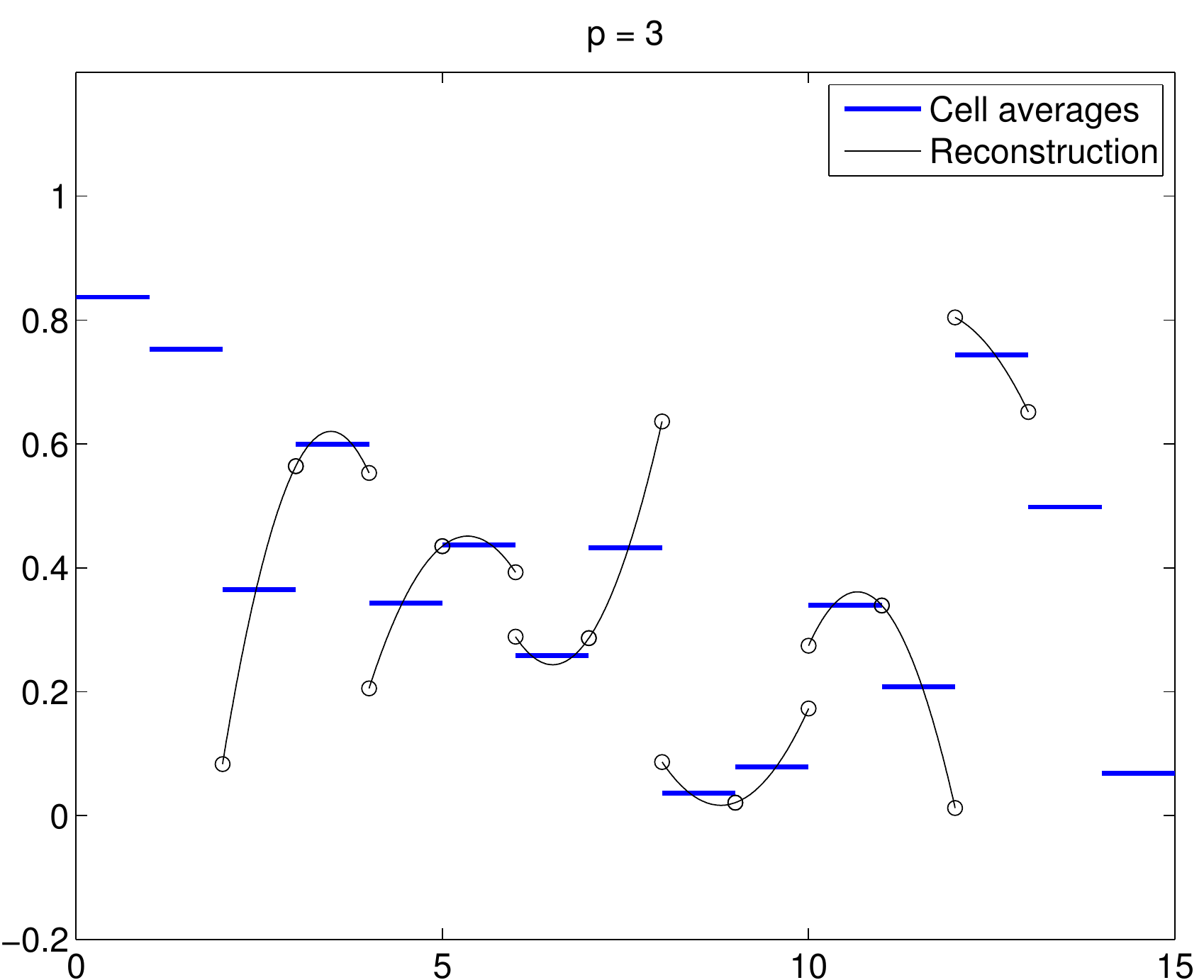}
\includegraphics[width=0.32\linewidth]{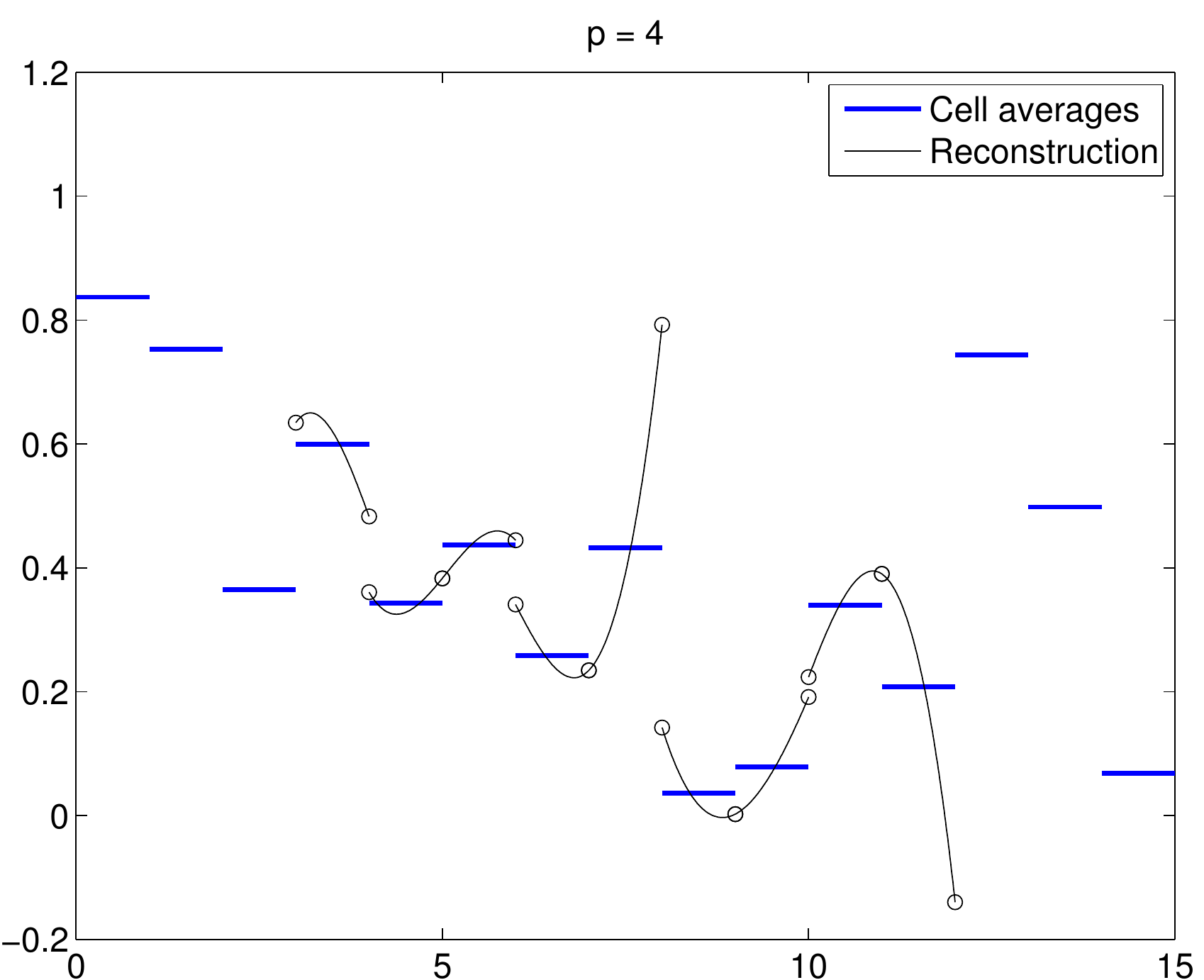}
\includegraphics[width=0.32\linewidth]{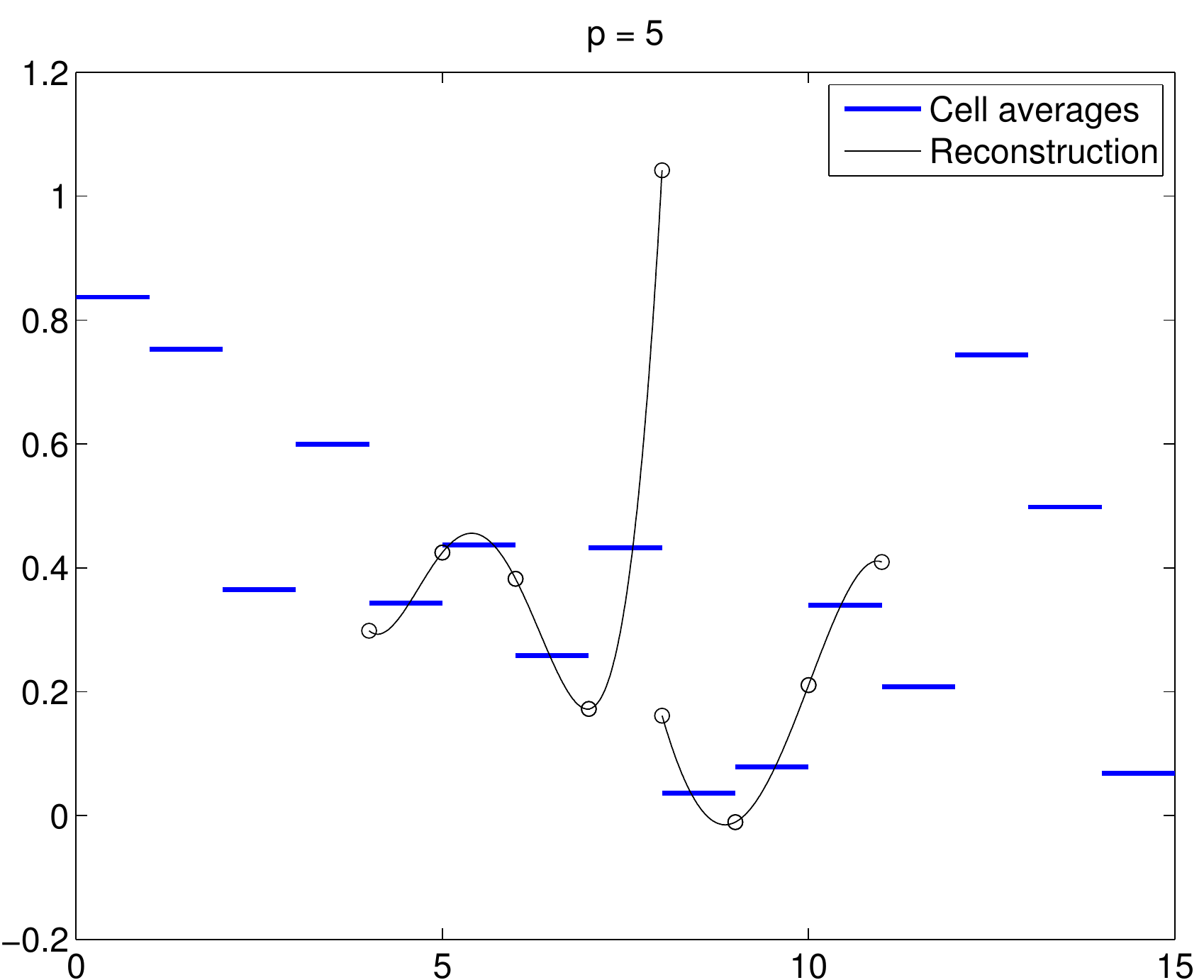}
\caption{ENO reconstruction of randomly chosen cell averages.}
\label{fig:eno}
\end{figure}

\begin{remark}
We emphasize that the main Theorem \ref{thm:enosign} is valid for any order of ENO reconstruction and for any mesh size. It is valid for non-uniform meshes and makes no assumptions on the function $v$, other than that the cell averages $\overline{v}_i$ must be well-defined, which is guaranteed if e.g.~ $v\in L^1_{\rm loc}(\R)$. This is a remarkable rigidity property of the piecewise-polynomial interpolation.
\end{remark}

\begin{remark}
The stability asserted in Theorem \ref{thm:enosign} is realized in terms of the reconstructed point-values at cell interfaces $v^\pm_{i+\hf}$. These are  precisely the input for the construction of high-order accurate finite volume schemes for  nonlinear conservation laws (see Shu \cite{Shu97}), and the relation between these values and the cell averages will be the main point of study in this paper. This approach was taken in \cite{FMT11}, where we use the sign property  to construct arbitrarily high-order accurate \emph{entropy stable} ENO schemes for systems of conservation laws.
\end{remark}

\begin{remark}
The proof of both the sign property and the related upper-bound \eqref{eq:jmpub} depends on the judicious choice of ENO stencils in Algorithm \ref{alg:eno}, and it may fail for other choices of ENO-based algorithms. In particular, the popular  WENO methods, which are based on upwind or central  \emph{weighted} ENO stencils, \cite{JS96,LPR99,QS02},  fail to satisfy the sign property, as can be easily confirmed numerically.
\end{remark}


\subsection{ENO interpolation} 
The ENO algorithm can be formulated as a nonlinear \emph{interpolation} procedure. The starting point is a given collection of point-values, $\{v(x_i)\}_{i\in{\mathbb Z}}$. The purpose of the ENO procedure  in this context (abbreviated by ${\mathcal I}$) is to recover a highly accurate approximation of $v(x)$ from its point-values $v_i:=v(x_i)$,
\[
\text{\bf ENO:} \qquad \{v(x_i)\}_i \quad \ \mapsto \quad \ {\mathcal I}v(x):=\sum_k\v_k(x)\Ik(x), \qquad x_{k\pm\hf}:=\frac{x_k+x_{k\pm1}}{2}.
\]
Here, $\v_k(x)$ are polynomials of degree $p-1$ which interpolate the given data,
\[
{\mathcal I}v(x_i)=\v_i(x_i)=v_i.
\]
Moreover, the ENO interpolant ${\mathcal I}v(x)$ is essentially non-oscillatory in the sense of recovering $v(x)$ to order ${\mathcal O}(h^p)$. In particular, since $v(x)$ may experience jump discontinuities, we wish to recover the point-values, $\juminus{i+\hf}:=\v_i(x_{i+\hf})$ and $\jumplus{i+\hf}:=\v_{i+1}(x_{i+\hf})$, with high-order accuracy,
\[
\left|\juminus{i+\hf}-v(x_{i+\hf}-)\right| +\left|\jumplus{i+\hf}-v(x_{i+\hf}+)\right| = \mathcal{O}(h^p).
\]
This version of the ENO procedure was used for finite difference approximation of nonlinear conservation laws in \cite{SO89}.
The ENO  interpolant ${\mathcal I}v(x)$ is based on the usual divided differences $\{v[x_i,\dots,x_{i+j}]\}_i$, starting with the grid-values $v[x_i]=v_i$ and  defined recursively for $j >0$.

Let $\{\rr_j\}_{j=1}^p$  be the  offsets of the ENO stencil associated with grid point  $x_i$. In this case of ENO interpolation,  these offsets are non-positive \emph{integers}, corresponding to the integer indices of the prescribed gridpoints $x_j$. These offsets are selected according to the following ENO selection procedure.

\begin{algorithm}[ENO interpolation: selection of ENO stencil]\label{alg:eno_interpolant}
\mbox{ }
\newline
Let point values $v_{i-p+1}, \dots, v_{i+p-1}$ be given.
\begin{itemize}
\item Set $\rr_1 = 0$.
\item For each $j=1, \dots, p-1$, do: 
$$
\begin{cases}
\text{if} \quad |v[x_{i+\rr_{j}-1},\dots,x_{i+\rr_j+j-1}]| < |v[x_{i+\rr_j},\dots,x_{i+\rr_j+j}]| & \mapsto \quad {\rm set} \ \ \ \rr_{j+1}=\rr_j-1, \\
\text{otherwise}& \mapsto \quad {\rm set} \ \ \ \rr_{j+1}=\rr_j.
\end{cases}
$$

\item Set $\v_i(x)$ as the interpolant of $v$ over the stencil $\{v_{i+k}\}_{k=\rr_p}^{\rr_p+p-1}$:
\[
\v_i(x) = \sum_{j=0}^{p-1} v[x_{i+\rr_j},\dots,x_{i+\rr_j+j}] \prod_{m=0}^{j-1} \left(x-x_{i+\rr_{j}+m}\right).
\]
\end{itemize}
\end{algorithm}

 In the following theorem we state the main stability result for this version of the ENO interpolation procedure, analogous to the sign property of the ENO reconstruction procedure from cell averages. 

\begin{theorem}[The sign property revisited -- ENO interpolation]\label{thm:enosignp}		
Fix an integer $p>1$. Given the point-values $\{{v}_i\}$, let ${\mathcal I}v(x)$ be the $p$-th order ENO interpolant of these point-values, outlined in Algorithm \ref{alg:eno_interpolant},
\[
{\mathcal I}v(x) = \sum_k \v_k(x)\Ik(x), \qquad {\rm deg}\,\v_k(x)\leq p-1.
\]
Let $\juminus{i+\hf}:={\mathcal I}v(x_{\iphf}-)$ and $\jumplus{i+\hf}:= {\mathcal I}v(x_\iphf+)$ denote left and right reconstructed point-values at the cell interfaces $x_{i+\hf}$. Then the following sign property holds at all interfaces:
\begin{equation}\label{eq:signpresPW}	
\begin{cases}
\ {\rm if} \quad  {v}_{i+1} - {v}_i \geq 0 & \  {\rm then} \quad   \jumplus{i+\hf} - \juminus{i+\hf}\geq 0;\\
\ {\rm if} \quad  {v}_{i+1} - {v}_i \leq 0 & \  {\rm then} \quad    \jumplus{i+\hf} - \juminus{i+\hf}\leq 0.
\end{cases}
\end{equation}
In particular, if the point-values ${v}_{i+1} = {v}_i$ then the ENO interpolation is continuous across their mid-points, $\jumplus{i+\hf} = \juminus{i+\hf}$. 
Moreover, there is a constant ${\sf c}_p$, depending only on $p$ and on the mesh-ratio ${|x_{j+1}-x_j|}/{|x_j-x_{j-1}|}$ of neighboring grid cells, such that
\begin{equation}
\label{eq:jmpubPW} 	
0\leq \frac{\jumplus{i+\hf} - \juminus{i+\hf}}{{v}_{i+1} - {v}_i} \leq {\sf c}_p.
\end{equation}
\end{theorem}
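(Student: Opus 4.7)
The plan is to mimic the approach used for Theorem \ref{thm:enosign}, adapted to the fact that Algorithm \ref{alg:eno_interpolant} grows its stencil from the single-point seed $\{x_i\}$ rather than from a two-point seed. Writing $d_j := v[x_j,x_{j+1}] = (v_{j+1}-v_j)/(x_{j+1}-x_j)$, so that $v_{i+1}-v_i = (x_{i+1}-x_i)\,d_i$, the object to control is the ratio $(\jumplus{i+\hf}-\juminus{i+\hf})/d_i$: the sign property \eqref{eq:signpresPW} says this ratio is non-negative, and \eqref{eq:jmpubPW} further bounds it by ${\sf c}_p(x_{i+1}-x_i)$.

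I would proceed by induction on the order $p$. For the base case $p=2$ both interpolants are linear: Algorithm \ref{alg:eno_interpolant} gives $\v_i$ the stencil $\{x_{i-1},x_i\}$ precisely when $|d_{i-1}|<|d_i|$ and the stencil $\{x_i,x_{i+1}\}$ otherwise, and analogously for $\v_{i+1}$, so there are only four possible stencil pairs. In each one a direct computation writes the jump as $\tfrac12(x_{i+1}-x_i)$ times a linear combination of $\{d_{i-1},d_i,d_{i+1}\}$; the ENO selection inequalities (where applicable) force this combination to share the sign of $d_i$ with magnitude bounded by a mesh-ratio multiple of $|d_i|$, simultaneously yielding \eqref{eq:signpresPW} and \eqref{eq:jmpubPW} for $p=2$.

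For the inductive step, assume the theorem holds up to order $p-1$. Since Algorithm \ref{alg:eno_interpolant} adds exactly one node per step, the order-$p$ stencil $T_i^{(p)}$ consists of the order-$(p-1)$ stencil $T_i^{(p-1)}$ together with one new node, and Newton's forward formula gives
\[
\v_i^{(p)}(x) = \v_i^{(p-1)}(x) + v[T_i^{(p)}]\,\omega_i^{(p-1)}(x), \qquad \omega_i^{(p-1)}(x):=\prod_{y\in T_i^{(p-1)}}(x-y),
\]
and identically for $\v_{i+1}^{(p)}$. Subtracting these formulas at $x=x_{i+\hf}$ decomposes the order-$p$ jump as the order-$(p-1)$ jump, which has the correct sign by the inductive hypothesis, plus a correction
\[
\Delta := v[T_{i+1}^{(p)}]\,\omega_{i+1}^{(p-1)}(x_{i+\hf}) - v[T_i^{(p)}]\,\omega_i^{(p-1)}(x_{i+\hf}).
\]
The ENO selection at step $p-1$ makes each of $v[T_i^{(p)}]$ and $v[T_{i+1}^{(p)}]$ the smaller-in-absolute-value of two candidate divided differences; iterating these inequalities downward bounds $|v[T_k^{(p)}]|$ by a mesh-ratio multiple of $|d_i|$, while each node product $\omega_k^{(p-1)}(x_{i+\hf})$ is controlled by $(x_{i+1}-x_i)^{p-1}$ times a mesh-ratio factor.

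The main obstacle is to show that $\Delta$ either carries the sign of $d_i$ or has magnitude strictly dominated by the inductive jump. I expect this to follow from a case analysis organised by the pair of final offsets $(\rr_p^{(i)},\rr_p^{(i+1)})$ and, in particular, by whether the two stencils extend in the same or in opposite directions at step $p-1$. When they extend in the same direction, the two divided differences entering $\Delta$ are closely related (they share all but one node of the stencil used to compute them), which produces the cancellation needed to control $\Delta$; when they extend in opposite directions, the two ENO inequalities in force at step $p-1$ apply to disjoint pairs of candidate differences and can be combined to pin down the sign of $\Delta$ directly. Closing the induction establishes \eqref{eq:signpresPW} and furnishes the constant ${\sf c}_p$ in \eqref{eq:jmpubPW}; the lower bound $0$ in \eqref{eq:jmpubPW} is an immediate consequence of the sign property.
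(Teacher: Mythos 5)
Your decomposition of the order-$p$ jump as (order-$(p-1)$ jump) $+\ \Delta$ is a genuinely different route from the paper's, but it contains a gap at exactly the point you flag as ``the main obstacle.'' The dichotomy you need --- that $\Delta$ either carries the sign of $v_{i+1}-v_i$ or is dominated in magnitude by the inductive jump $J_{p-1}$ --- is precisely the statement $J_p=J_{p-1}+\Delta\geq 0$, i.e.\ it is a restatement of the conclusion rather than something reducible to the ENO selection inequalities one order at a time. And the sign of $\Delta$ alone really is indeterminate: for instance, the order-$(p-1)$ stencils at $x_i$ and $x_{i+1}$ can be one cell apart (so $J_{p-1}\neq 0$) while the order-$p$ stencils coincide (so $J_p=0$ and $\Delta=-J_{p-1}$ has the \emph{wrong} sign); conversely the order-$(p-1)$ stencils can coincide while the order-$p$ ones differ, so that $J_{p-1}=0$ and the entire burden falls on $\Delta$, which then mixes two $(p-1)$-th divided differences over stencils that may be up to $p-1$ cells apart and whose node products $\omega_i^{(p-1)}(x_{i+\hf})$, $\omega_{i+1}^{(p-1)}(x_{i+\hf})$ carry different signs. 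Your proposed case analysis over $(\rr_p^{(i)},\rr_p^{(i+1)})$ gives no mechanism for producing the required cancellation in these intermediate configurations.

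The paper resolves this with a different decomposition: at the \emph{fixed} final order $p$ it telescopes the difference of the two interpolants over a chain of one-cell-shifted stencils, yielding the identity \eqref{eq:enoSumExprFD} in which each summand is a single higher-order divided difference $\del{\rr}{p+1}$ multiplied by a geometric factor of known sign $(-1)^{\rr+p+1}$, for $\rr$ ranging over the offsets between the two final stencils. The induction on the order is then performed not on the jump itself but on the signs of these divided differences along the ENO selection path (Lemma \ref{lem:signPropFD}, the analogue of Lemma \ref{thm:ddsign}): the selection inequality at each stage forces $(-1)^{\rr+p+1}\del{\rr}{p+1}\geq 0$ for every offset in the relevant range, so each telescoping summand is individually signed; the same recursion with the reverse inequality yields the constants ${\sf c}_p$. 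Your base case $p=2$ is fine and your bound on the magnitudes $|v[T_k^{(p)}]|$ would feed into the upper bound \eqref{eq:jmpubPW}, but to close the sign property you would need to replace the vertical splitting $J_p=J_{p-1}+\Delta$ by this horizontal telescoping, or else supply an argument for $\Delta$ that you do not currently have.
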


The rest of this paper is devoted to proving the stability of the ENO procedure. We begin with the ENO reconstruction procedure in Theorem \ref{thm:enosign}. In Section \ref{sec:sign} we prove the sign property \eqref{eq:signpres} and in Section \ref{sec:jumpBound} we prove the upper bound \eqref{eq:jmpub}. Similar stability results holds for the ENO interpolation procedure, dealing with  point-values instead of cell averages. In Section \ref{sec:fdm} we prove the sign property for the ENO interpolation stated in the main theorem \ref{thm:enosignp}.\newline
These results were announced earlier in \cite{FMT10}.

\section{The sign property for ENO reconstruction}\label{sec:sign}
The aim of this section is to prove the sign property \eqref{eq:signpres}. 
To this end we derive a novel expression of  the interface jump, $\jumplus{i+\hf}-\juminus{i+\hf}$, as a  sum of terms which involve  $(p+1)$-th divided differences of $V$, and we show that each summand in this expression has the same sign as $\overline{v}_{i+1}-\overline{v}_i$. 

We recall that at each cell $\cell_i$, the ENO reconstruction is based on a particular stencil of $p+1$ consecutive gridpoints, $\{x_{i+r}, \dots, x_{i+r+p}\}$, where $r=r(i)$ is the  offset of such stencil. 

\bigskip\noindent
{\bf Notation}. We will reserve the indices $r$ and $s$ to denote  offsets of ENO reconstruction stencils. We recall that these offsets measure the shifts to the left of each stencil, and are indexed at negative \emph{half-integers}, $-p+\hf \leq r,s \leq -\hf$,  to match the indexing of cell interfaces at half-integers. 
\bigskip

Since this choice of ENO offset depend on the data through the iterative Algorithm \ref{alg:eno}, we need to trace the hierarchy of ENO stencils which ends with the final offset $r=r(i)$. To simplify notations, we focus our attention on a typical cell $\cell_0$, with an initial stencil which consists of the edges at $x_{-\hf}$ and $x_\hf$. The stencil is identified by its leftmost index, $r_1=-\hf$. Next, the stencil is extended, either to the left, $\{x_{-\thf},x_{-\hf},x_\hf\}$ where $r_2=-\thf$, or to the right, $\{x_{-\hf},x_\hf,x_{\thf}\}$ with $r_2=-\hf$. In the next stage, there are three possible stencils, which are identified by the leftmost offset:  $r_3=-\fhf$ corresponding to $\{x_{-\fhf},\dots,x_\hf\}$,  $r_3=-\thf$ corresponding to $\{x_{-\thf},\dots,x_\thf\}$, or $r_3=-\hf$ corresponding to $\{x_{-\hf},\dots,x_\fhf\}$. Stage $j$ of the ENO Algorithm \ref{alg:eno} involves the stencil of $j+1$ consecutive points, $\{x_{r_j},\dots,x_{r_j+j}\}$. The series of offsets of this hierarchy of stencils,
$r_1,r_2,\dots,r_p$, forms the \emph{signature} of the ENO algorithm. Note that by our construction,
\[
r_1=-\hf\geq r_2\geq r_3\geq \dots \geq r_p \geq -p+\hf,
\] 
and whenever needed, we set  $r_{-1} = r_0 = -\hf$. The stability of ENO will be proved by carefully studying such data-dependent signatures. 
 
The Newton representation of the $p$-th degree interpolant $F_0(x)$, based on point-values \\
$V(x_{r_p}),V(x_{r_p+1}),\dots,V(x_{r_p+p})$, is given by 
\begin{equation}\label{eq:Newton}
\V_0(x) = \sum_{j=0}^p V\left[x_{r_j}, \dots, x_{r_j + j}\right] \prod_{m=0}^{j-1}\left(x-x_{r_{j} + m}\right),
\end{equation}
where $V[x_k,\dots,x_{k+j}]$ are the $j$-th  divided difference of $V$ at the specified gridpoints. Observe that in (\ref{eq:Newton}), we took the liberty of  summing the contributions of stencils in their ``order of  appearance' rather than the usual sum of stencils from left to right.

\bigskip\noindent
{\bf Notation}. For notational convenience, we denote the $j$-th divided difference of the primitive $V$ as
\[
\Del{r}{j} := V[x_r, \dots, x_{r+j}], \quad \Del{r}{j}= \frac{\Delii{r+1}{r+j}-\Del{r}{j-1}}{x_{r+j}-x_r}, \qquad r=\dots,-\thf,-\hf,\hf.
\]
Thus, for example, by \eqref{eq:primitive} we have $\Delii{-\hf}{\thf} =V[x_{-\hf},x_\hf,x_\thf]= (x_{\thf}-x_{-\hf})(\overline{v}_1-\overline{v}_0)$. 

\medskip
If $\Delii{-\hf}{\thf} = 0$, or in other words, if $\avg{v}_0 = \avg{v}_1$, then it is easy to see that the ENO procedure will end up with identical stencils for $\cell_0$ and $\cell_1$, which in turn yields $\vminus = \vplus$. We may therefore assume that $\Delii{-\hf}{\thf} \neq 0$, and the sign property will be proved by showing that 
\begin{equation}\label{eq:sprop}
\text{\bf Sign property:} \qquad 
\begin{cases}
\ {\rm if} \quad  \Delii{-\hf}{\thf}> 0 \ \ & {\rm then} \quad  \vplus-\vminus\geq 0;\\
\ {\rm if} \quad \Delii{-\hf}{\thf}< 0 \ \ & {\rm then} \quad  \vplus-\vminus\leq 0.
\end{cases}
\end{equation}

To verify (\ref{eq:sprop}), we examine the ENO reconstruction at cell $\cell_0$, given by  $\v_0(x) := \V_0'(x)$. Differentiation of (\ref{eq:Newton}) yields 
\[
\v_0(x) = \sum_{j=1}^p \Del{r_j}{j} \sum_{l=0}^{j-1}\prod_{\substack{m=0\\m\neq l}}^{j-1}\left(x-x_{r_{j} + m}\right).
\]
The value of $\v_0$ at the cell interface $x_\hf$ is then
\begin{equation}\label{eq:stam}
\vminus = \v_0(x_\hf) = \sum_{j=1}^p \Del{r_j}{j} \sum_{l=0}^{j-1}\prod_{\substack{m=0\\m\neq l}}^{j-1}\left(x_\hf - x_{r_{j} + m}\right) 
= \sum_{j=1}^p \Del{r_j}{j}\!\!\!\!\!\!\!\!\!\!\!\!\prod_{\substack{m=0\\m\neq -r_{j}+\hf}}^{j-1}\!\!\!\!\!\!\! \left(x_\hf-x_{r_{j} + m}\right) 
\end{equation}
The last equality follows from the fact that all  but the one term corresponding to $l=-r_{j}+\hf$ drop out. 

\bigskip\noindent
{\bf Notation}. To simplify notations, we use $\prodii{}{}$ to denote a product which skips any of its zero factors,
$ \prodii{j\in J}{}\alpha_j:= \prod_{j\in J: \alpha_j\neq 0}\alpha_j$.\newline
Thus, for example, a simple shift of indices in (\ref{eq:stam}) yields
$\displaystyle 
\vminus =  \sum_{j=1}^p \Del{r_j}{j} \prodii{m=-1}{j-2}\left(x_\hf-x_{r_{j} + m+1}\right)$.
In an similar fashion, we handle the ENO reconstruction at cell $\cell_1$. Let $s_1, \dots, s_p$ be the signature of that cell. Note that $r_j \leq s_j+1$, since the ENO reconstruction at stage $j$ in cell $\cell_1$ cannot select a stencil further to the left than the one used in cell $\cell_0$. If $r_j = s_j+1$, then the two interpolation stencils are the same, and so $\vplus-\vminus=0$. Hence, we only need to consider the case $r_j\leq s_j$. The reconstructed value of $\v_1(x)=\V_1'(x)$ at $x=x_\hf$ is  given by 
\[
\vplus=f_1(x_{\hf})=\sum_{j=1}^p \Del{1+s_j}{j} \prodii{m=0}{j-1}\left(x_\hf-x_{s_{j} + m+1}\right)  
\]
The jump in the  values reconstructed at $x=x_\hf$ is then given by 
\begin{equation}\label{eq:jump}
\vplus-\vminus = \sum_{j=1}^p\left(\Del{1+s_j}{j} \prodii{m=0}{j-1}\left(x_{\hf}-x_{s_{j} + m+1}\right) - \Del{r_j}{j} \prodii{m=-1}{j-2}\left(x_\hf-x_{r_{j} + m+1}\right)\right). 
\end{equation}

The following lemma provides a much needed simplification for the rather intimidating expression (\ref{eq:jump}), in terms of a key identity, which is interesting in its own right.

\begin{lemma}\label{thm:sumexpr}		
The jump of the reconstructed point-values in \eqref{eq:jump} is given by  
\begin{equation}\label{eq:sumexpr}		
\begin{split}
\vplus-\vminus = \sum_{r=r_p}^{s_p}\Del{r}{p+1}(x_{r+p+1} - x_{r})\prodii{m=0}{p-1}\left(x_\hf-x_{r + m+1}\right).  
\end{split}
\end{equation}
\end{lemma}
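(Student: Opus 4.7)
My plan is to bypass direct algebraic manipulation of the intermediate-signature sum in \eqref{eq:jump} and instead compute the polynomial difference $\V_1(x) - \V_0(x)$ from the fact that both $\V_0$ and $\V_1$ are classical Lagrange interpolants of $V$ over $(p+1)$-point stencils determined solely by the \emph{final} offsets $r_p$ and $s_p$. Once this polynomial identity is in hand, the claim will follow by differentiating and evaluating at $x = x_\hf$. A pleasant byproduct is that the intimidating sum \eqref{eq:jump} will be revealed to be insensitive to the intermediate offsets $r_1, \dots, r_{p-1}$ and $s_1, \dots, s_{p-1}$.

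After disposing of the trivial case $r_p = s_p + 1$ (where the two stencils coincide, $\V_0 \equiv \V_1$, and both sides of \eqref{eq:sumexpr} vanish), I would introduce intermediate Lagrange interpolants $\tilde\V^{(k)}$ of $V$ on the shifted stencils $\{x_{r_p+k}, \dots, x_{r_p+k+p}\}$ for $k = 0, 1, \dots, s_p - r_p + 1$, so that $\tilde\V^{(0)} = \V_0$ and $\tilde\V^{(s_p - r_p + 1)} = \V_1$. Consecutive interpolants $\tilde\V^{(k)}$ and $\tilde\V^{(k+1)}$ share the $p$ nodes $\{x_{r_p+k+1}, \dots, x_{r_p+k+p}\}$, so their difference is a polynomial of degree at most $p$ that factors as $C_k \prod_{m=1}^p (x - x_{r_p+k+m})$. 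The constant $C_k$ is pinned down by applying the standard Lagrange remainder formula $V(x) - \tilde\V^{(k)}(x) = V[x_{r_p+k}, \dots, x_{r_p+k+p}, x]\prod_{m=0}^p(x - x_{r_p+k+m})$ at the one unshared node $x_{r_p+k+p+1}$, which yields $C_k = (x_{r+p+1} - x_r)\,\Del{r}{p+1}$ with $r := r_p + k$. Telescoping then gives
\[
\V_1(x) - \V_0(x) = \sum_{r=r_p}^{s_p}(x_{r+p+1} - x_r)\,\Del{r}{p+1}\prod_{m=1}^p(x - x_{r+m}).
\]

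Differentiating and evaluating at $x = x_\hf$ produces $\vplus - \vminus$ on the left, while the Leibniz rule applied on the right produces $\sum_{l=1}^p \prod_{m \neq l}(x_\hf - x_{r+m})$ for each $r$. The step that requires the most care is the collapse of this inner sum: for every admissible $r \in [r_p, s_p] \subseteq [-p + \hf, -\hf]$, the value $x_\hf$ does occur in the product at index $m = \hf - r \in \{1, \dots, p\}$, so every summand other than $l = \hf - r$ picks up the zero factor $(x_\hf - x_\hf)$ and vanishes; the sole surviving summand is precisely $\prodii{m=1}{p}(x_\hf - x_{r+m})$. A reindexing $m \mapsto m+1$ converts this into $\prodii{m=0}{p-1}(x_\hf - x_{r+m+1})$, matching the right-hand side of \eqref{eq:sumexpr} exactly. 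Thus the only real obstacle is the index bookkeeping — verifying that the admissible offset range always places the unique zero factor inside the range $\{1, \dots, p\}$ — and this is guaranteed by the ENO stencil constraint.
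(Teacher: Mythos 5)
Your proof is correct and follows essentially the same route as the paper: both arguments telescope $\V_1-\V_0$ over the chain of one-cell-shifted interpolants, identify each consecutive difference as $\Del{r}{p+1}(x_{r+p+1}-x_r)\prod_{m=1}^{p}(x-x_{r+m})$, and then differentiate and evaluate at $x_\hf$, where all but one Leibniz term vanish because $x_\hf$ lies in every shared stencil. The only (immaterial) difference is how the coefficient of the one-shift difference is pinned down --- you use the divided-difference remainder formula at the unshared node, while the paper reads it off by assembling the two Newton forms in a common node ordering.
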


We postpone the proof of Lemma \ref{thm:sumexpr} to the end of this section, and we turn to use it in order to conclude the proof of the sign property. 
To this end,  we show that each non-zero summand in \eqref{eq:sumexpr} has the same sign as $\avg{v}_1-\avg{v}_0$. Since
\begin{subequations}\label{eqs:sign}
\begin{equation}\label{eq:signa}
\sgn\left((x_{r+p+1} - x_{r})\prodii{m=0}{p-1}\left(x_\hf-x_{r + m+1}\right)\right) = (-1)^{r+p-\hf}, \quad r=-p+\hf,\dots,-\hf,   
\end{equation}
then in view of (\ref{eq:sprop}), it remains to prove the following.
\begin{lemma}\label{thm:ddsign}		
Let $\{r_j\}_{j=1}^{p}$ and $\{s_j\}_{j=1}^{p}$ be the signatures of the ENO stencils associated with cells $\cell_0$ and, respectively, $\cell_1$. Then the following holds:
\begin{eqnarray}
\ {\rm if } \quad \Delii{-\hf}{\thf} > 0 & {\rm then} \quad (-1)^{r+p-\hf} \Del{r}{p+1}\geq 0,   \qquad r=r_p, \dots,s_p,  \label{eq:signb}\\
\ {\rm if } \quad \Delii{-\hf}{\thf} < 0 & {\rm then} \quad (-1)^{r+p-\hf} \Del{r}{p+1}\leq 0,
 \qquad  r=r_p, \dots,s_p.  \label{eq:signc}
\end{eqnarray}
\end{lemma}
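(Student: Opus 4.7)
The plan is to prove by induction on $j\in\{1,\dots,p\}$ the strengthened claim
\[
(P_j):\quad (-1)^{r+j-\hf}\,\sigma\cdot\Del{r}{j+1}\geq 0\qquad\text{for every } r\in[r_j,s_j],
\]
with $\sigma:=\sgn(\Delii{-\hf}{\thf})$; taking $j=p$ reproduces \eqref{eq:signb}--\eqref{eq:signc}. Throughout we may assume $r_k\leq s_k$ for each $k\leq p$, since if ever $r_k=s_k+1$ the two ENO stencils already coincide and $\vplus=\vminus$. The base case $(P_1)$ is immediate from $r_1=s_1=-\hf$ and $\Del{-\hf}{2}=\Delii{-\hf}{\thf}$.

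For the step $(P_j)\Rightarrow(P_{j+1})$, I would begin with the divided-difference recursion
\[
\Del{r}{j+2}\;=\;\frac{\Del{r+1}{j+1}-\Del{r}{j+1}}{x_{r+j+2}-x_r},
\]
so that $\sgn(\Del{r}{j+2})$ is the sign of the numerator, and then split $r\in[r_{j+1},s_{j+1}]$ into three regimes. In the \emph{interior regime} $r\in[r_j,s_j-1]$, both $\Del{r}{j+1}$ and $\Del{r+1}{j+1}$ fall under $(P_j)$, which assigns them opposite signs; their difference then inherits the sign of $\Del{r+1}{j+1}$, which is precisely $(-1)^{r+j+\hf}\sigma$. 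At the \emph{left endpoint} $r=r_{j+1}=r_j-1$, cell $\cell_0$'s decision to shift left at stage $j$ forces $|\Del{r_j-1}{j+1}|<|\Del{r_j}{j+1}|$; combined with the elementary identity $|a|<|b|\Rightarrow\sgn(b-a)=\sgn(b)$, this identifies the numerator's sign with that of $\Del{r_j}{j+1}$, supplied by $(P_j)$. Symmetrically, at the \emph{right endpoint} $r=s_{j+1}=s_j$, cell $\cell_1$'s decision not to shift at stage $j$ forces $|\Del{s_j}{j+1}|\geq|\Del{s_j+1}{j+1}|$, whence $|a|\geq|b|\Rightarrow\sgn(b-a)=-\sgn(a)$ identifies the numerator's sign with $-\sgn(\Del{s_j}{j+1})$, again delivered by $(P_j)$.

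A direct case check on the four possible combinations of shift/stay decisions at stage $j$ confirms that the three regimes above exhaustively partition $[r_{j+1},s_{j+1}]$. I expect the main obstacle to be parity bookkeeping, namely keeping the $\pm\hf$ shifts in $(-1)^{r+j\pm\hf}$ aligned with the half-integer offsets $r_j,s_j$ and the stage transitions, and verifying the partition claim in each of the four branching patterns. Vanishing divided differences cause no trouble: the claim is posed as a weak inequality that is automatically satisfied in the zero case.
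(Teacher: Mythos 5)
Your proposal is correct and follows essentially the same route as the paper's proof: induction on the stage of the stencil-selection algorithm, with the interior offsets $r\in[r_j,s_j-1]$ handled purely by the induction hypothesis via the divided-difference recursion, and the two endpoint cases $r_{j+1}=r_j-1$ and $s_{j+1}=s_j$ handled by invoking the ENO comparison inequalities. The only (harmless) cosmetic differences are that you treat both signs at once through $\sigma$ and phrase the endpoint steps as elementary sign identities rather than explicit absolute-value estimates.
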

\end{subequations}
Since $r$ runs over half-integers, (\ref{eqs:sign}) imply that each non-zero term in the sum \eqref{eq:sumexpr}  has the same sign as $\Delii{-\hf}{\thf}$, and Theorem \ref{thm:enosign} follows from the sign property, (\ref{eq:sprop}). 

\begin{proof}
We consider the case (\ref{eq:signb}) where $\Delii{-\hf}{\thf} > 0$;  the case (\ref{eq:signc}) can be argued similarly. The result clearly holds for $p=1$, where $r_p=s_p=-\hf$. Assuming that it holds for some $p\geq 1$, namely,  that  $(-1)^{r+p-\hf}\Del{r}{p+1}\geq 0$ for  $r=r_p,\dots,s_p$,  we will verify that it holds for $p+1$. Indeed, 
\begin{eqnarray}\label{eq:already}
(-1)^{r+p+\hf}\Del{r}{p+2} &\equiv& (-1)^{r+p+\hf}\frac{\Delii{r+1}{r+1+p+1} - \Del{r}{p+1}}{x_{r+p+2} - x_{r}}\\
 & = & \frac{(-1)^{r+p+\hf}\Delii{r+1}{r+p+2} + (-1)^{r+p-\hf} \Del{r}{p+1}}{x_{r+p+2} - x_{r}} \geq 0 \nonumber 
\end{eqnarray}
for $r=r_p,\dots,s_p-1$, by the induction hypothesis. Thus, it remains to examine $\Del{r}{p+2}$ when $r=r_{p+1}< r_p$ and $r=s_{p+1}\geq s_p$.  
\begin{itemize}
 \item[(a)] The case $r_{p+1} = r_p$ is already included in (\ref{eq:already}), so the only other possibility is when $r_{p+1}=r_p-1$. According to the ENO selection principle, this choice of extending the stencil to the left occurs when $|\Delii{r_p-1}{r_p+p}| < |\Del{r_p}{p+1}|$. Consequently, since $(-1)^{r_{p+1}+p+\hf}=(-1)^{r_{p}+p-\hf}$, and by assumption
$(-1)^{r_{p}+p-\hf}\Del{r_p}{p+1}=|\Del{r_p}{p+1}|$, we have 
\[
(-1)^{r_{p+1}+p+\hf} \Del{r_{p+1}}{p+2} \geq  \frac{(-1)^{r_{p}+p-\hf}\Del{r_p}{p+1} - |\Delii{r_p-1}{r_p+p}|}{{x_{r_p+p+1} - x_{r_p-1}}} > 0,
\]
and (\ref{eq:signb}) follows.
\item[(b)] The case $s_{p+1}=s_p-1$ is already covered in (\ref{eq:already}). The only other possibility is therefore  $s_{p+1} = s_p$. By the ENO selection procedure, this extension to the right occurs when
$|\Delii{s_p+1}{s_p+p+2}| \leq |\Del{s_p}{p+1}|$. But $(-1)^{s_{p+1}+p+\hf}=-(-1)^{s_p+p-\hf}$ and by assumption, $(-1)^{s_p+p-\hf} \Del{s_p}{p+1}=|\Del{s_p}{p+1}|$, hence
\[
(-1)^{s_{p+1}+p+\hf} \Del{s_{p+1}}{p+2} \geq \frac{-|\Delii{s_p+1}{s_p+p+2}| + (-1)^{s_p+p-\hf} \Del{s_p}{p+1}}{{x_{s_p+p+2} - x_{s_p}}} \geq0,
\]
 and (\ref{eq:signb}) follows. 
\end{itemize}
\end{proof}

We close this section with the promised \emph{proof of Lemma \ref{thm:sumexpr}}.
\begin{proof}  
We proceed in two steps.
In the first step, we  consider the special case when the two stencils that are used by the ENO reconstruction in cells $\cell_0$ and $\cell_1$ are only one grid cell apart.
Such stencils must have the same offset, say $r_p = s_p = r$ and in this case, Lemma \ref{thm:sumexpr} claims that $\vplus-\vminus$ equals
\begin{equation}\label{eq:singleJumpExpr}
f_1(x_\hf)-f_0(x_\hf) = \Del{r}{p+1}\left(x_{r+p+1} - x_{r}\right)\prodii{m=0}{p-1}\left(x_\hf-x_{r + m+1}\right).  
\end{equation}
Indeed, the interpolant of $V(x_{r+1}),\dots,V(x_{r+p}),V(x_{r})$, assembled in the specified  order from left to right and then adding $V(x_{r})$ at the end, is given by
\[
F_0(x)=\sum_{j=0}^{p-1}\Delii{r+1}{r+1+j}\prod_{m=0}^{j-1}\left(x-x_{r+1+m}\right)
+\Del{r}{p}\prod_{m=0}^{p-1}\left(x-x_{r+1+m}\right).
\]
Similarly, the interpolant of $V(x_{r+1}),\dots,V(x_{r+p+1})$, assembled in the specified order from left to right, is given by
\[
F_1(x)=\sum_{j=0}^{p-1}\Delii{r+1}{r+j+1}\prod_{m=0}^{j-1}\left(x-x_{r+1+m}\right) + \Delii{r+1}{r+p+1}\prod_{m=0}^{p-1}\left(x-x_{r+1+m}\right)
\]
(cf. \eqref{eq:Newton}). Thus, their difference amounts to
\begin{align*}
F_1(x)-F_0(x) &= \left(\Delii{r+1}{r+p+1}- \Del{r}{p}\right)\prod_{m=0}^{p-1}\left(x-x_{r+1+m}\right) \\
&= \Delii{r}{r+p+1}\left(x_{r+p+1}-x_r\right)\prod_{m=0}^{p-1}\left(x-x_{r+1+m}\right),
\end{align*}
which reflects the fact that $F_0$ and $F_1$  coincide at the $p$ points $x_{r+1},\dots,x_{r+p}$. Differentiation yields
\begin{align*}
f_1(x)-f_0(x) &= \Delii{r}{r+p+1}\left(x_{r+p+1}-x_r\right)\sum_{l=0}^{p-1}\prod_{\substack{m=0\\m\neq l}}^{p-1}\left(x-x_{r+1+m}\right).
\end{align*}
At $x=x_\hf$, all product terms on the right vanish except for $l=-r-\hf$, since $x_\hf$ belongs to both stencils. We end up with
\[
f_1(x_\hf)-f_0(x_\hf)= \Delii{r}{r+p+1}\left(x_{r+p+1}-x_r\right)\prodii{m=0}{p-1}\left(x_\hf-x_{r+m+1}\right).
\]
This shows that (\ref{eq:singleJumpExpr}) holds, verifying Lemma \ref{thm:sumexpr} in the case that the stencils associated with $\cell_0$ and $\cell_1$ are separated by just one cell.

In step two, we extend this result for arbitrary stencils, where $\cell_0$ and $\cell_1$ are associated with arbitrary offsets, $r_p\leq s_p$. Denote by $F^{\{j\}}$ the interpolant at points $x_j,\dots,x_{j+p}$, so that $\V_0 = F^{\{r_p\}}$ and $\V_1 = F^{\{s_p+1\}}$. Using the representation from the first step for the difference of one-cell shifted stencils, $\bigl(f^{\{r+1\}} - f^{\{r\}}\bigr)(x_\hf)$, we can write the  jump at the cell interface as a telescoping sum,
\begin{align*}
(\v_1 - \v_0)(x_\hf) &= \left(f^{\{s_p+1\}} - f^{\{r_p\}}\right)(x_\hf) \\
&= \sum_{r=r_p}^{s_p} \Bigl(f^{\{r+1\}} - f^{\{r\}}\Bigr)(x_\hf) \\
&= \sum_{r=r_p}^{s_p}\Del{r}{p+1}(x_{r+p+1} - x_{r})\prodii{m=0}{p-1}\left(x_\hf-x_{r + m+1}\right),  
\end{align*}
and (\ref{eq:sumexpr}) follows.
\end{proof}

\section{The relative jumps in ENO reconstruction are bounded}\label{sec:jumpBound}
In this section, we will prove \eqref{eq:jmpub}, which establishes an upper bound on the size of the jump in reconstructed values in terms of the jump in the underlying cell averages. We need the following lemma.

\begin{lemma}\label{thm:ddbound}
Let $r_p, s_p$ be the (half-integer) offsets of the ENO stencils associated with cell $\cell_0$ and, respectively, $\cell_1$. Then
\begin{subequations}\label{eqs:ubounds}
\begin{equation}\label{eq:ubounda}
\frac{\Del{r}{p+1}}{\Delii{-\hf}{\thf}}(-1)^{r+p-\hf} \leq \Cu{r}{p}, \qquad r = r_p, \dots, s_p,
\end{equation}
where the constants $\Cu{r}{p}$ are defined recursively, starting with  $\Cu{r}{1} = 1$, and
\begin{equation}\label{eq:uboundb}
\Cu{r}{p+1} =\frac{2}{x_{r+p+2}-x_r} \max\left(C_{r, p}, C_{r+1, p}\right) \qquad \forall ~ r.
\end{equation}
\end{subequations}
\end{lemma}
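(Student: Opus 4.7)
The plan is to argue by induction on $p$, mirroring the structure of the proof of Lemma \ref{thm:ddsign}. Without loss of generality, assume $\Delii{-\hf}{\thf}>0$ (the opposite sign case reduces to this by replacing $v$ with $-v$). Lemma \ref{thm:ddsign} already guarantees that $(-1)^{r+p-\hf}\Del{r}{p+1}/\Delii{-\hf}{\thf}\geq 0$ for $r\in\{r_p,\dots,s_p\}$, so the substance of \eqref{eq:ubounda} is the matching upper bound. The base case $p=1$ is immediate: the ENO Algorithm \ref{alg:eno} forces $r_1=s_1=-\hf$, and a direct calculation gives $\Del{-\hf}{2}/\Delii{-\hf}{\thf}=1=\Cu{-\hf}{1}$.

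For the inductive step, I would combine the divided-difference recurrence $\Del{r}{p+2}=(\Del{r+1}{p+1}-\Del{r}{p+1})/(x_{r+p+2}-x_r)$ with the parity identity $(-1)^{r+p+\hf}=-(-1)^{r+p-\hf}=(-1)^{(r+1)+p-\hf}$ to obtain the key decomposition
\[
\frac{(-1)^{r+p+\hf}\Del{r}{p+2}}{\Delii{-\hf}{\thf}}=\frac{1}{x_{r+p+2}-x_r}\left[\frac{(-1)^{(r+1)+p-\hf}\Del{r+1}{p+1}}{\Delii{-\hf}{\thf}}+\frac{(-1)^{r+p-\hf}\Del{r}{p+1}}{\Delii{-\hf}{\thf}}\right].
\]
Whenever both $r$ and $r+1$ lie in the old inductive range $\{r_p,\dots,s_p\}$, each bracket term is nonnegative (Lemma \ref{thm:ddsign}) and bounded by $\Cu{r}{p}$ and $\Cu{r+1}{p}$ respectively (by the inductive hypothesis). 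Their sum is then at most $2\max(\Cu{r}{p},\Cu{r+1}{p})$, which is exactly the defining expression of $\Cu{r}{p+1}$.

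The main obstacle, and the essential use of the ENO algorithm, lies in the two boundary offsets where the ``interior'' argument above breaks down: $r=r_{p+1}$ when $r_{p+1}=r_p-1$, and $r=s_{p+1}$ when $s_{p+1}=s_p$. In each of these situations, one of the two $(p+1)$-th divided differences in the bracket lies \emph{outside} $\{r_p,\dots,s_p\}$, so neither Lemma \ref{thm:ddsign} nor the inductive hypothesis directly controls its sign or magnitude. This is precisely what the ENO selection rule addresses: the fact that Algorithm \ref{alg:eno} chose to shift left for $\cell_0$ at stage $p+1$ gives $|\Del{r_p-1}{p+1}|<|\Del{r_p}{p+1}|$, while the fact that it chose \emph{not} to shift left for $\cell_1$ at stage $p+1$ gives $|\Del{s_p+1}{p+1}|\leq|\Del{s_p}{p+1}|$. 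Combined with the sign property of the in-range partner (so that $(-1)^{r_p+p-\hf}\Del{r_p}{p+1}=|\Del{r_p}{p+1}|$ in the first case and $(-1)^{s_p+p-\hf}\Del{s_p}{p+1}=|\Del{s_p}{p+1}|$ in the second), the rogue bracket term is absorbed, and the whole bracket is bounded by $2\Cu{r_p}{p}\leq 2\max(\Cu{r}{p},\Cu{r+1}{p})$, respectively by $2\Cu{s_p}{p}\leq 2\max(\Cu{r}{p},\Cu{r+1}{p})$. Dividing by $x_{r+p+2}-x_r$ yields $\Cu{r}{p+1}$ in both cases, which completes the induction over all of $\{r_{p+1},\dots,s_{p+1}\}$.
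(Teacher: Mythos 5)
Your proposal is correct and follows essentially the same route as the paper: the same induction on $p$ via the divided-difference recurrence and parity identity, the same treatment of the interior offsets $r=r_p,\dots,s_p-1$ using the inductive hypothesis on both terms, and the same absorption of the out-of-range term at $r=r_{p+1}=r_p-1$ and $r=s_{p+1}=s_p$ using the ENO selection inequalities together with the sign property of the in-range partner. The only cosmetic difference is your explicit WLOG reduction to $\Delii{-\hf}{\thf}>0$, which the paper avoids by working directly with the sign-normalized ratio.
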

The quantity on the left in \eqref{eq:ubounda} was shown to be bounded from below by zero in Lemma \ref{thm:ddsign}; here we prove an upper bound. The constants $C_{r,p}$ \emph{only} depend on the grid sizes $|I_j|$.
\begin{proof}
The result clearly holds for $p=1$. We prove the general induction step passing from $p\mapsto p+1$. Using the recursion relation  
\[
\Del{r}{p+2} = \frac{\Delii{r+1}{r+p+2} - \Del{r}{p+1}}{\Delta x}, \qquad \Delta x:= x_{r+p+2}-x_r,
\]
we have
\begin{equation}\label{eq:alreadyii}
\begin{split}
0 \leq \frac{\Del{r}{p+2}}{\Delii{-\hf}{\thf}}(-1)^{r+p+\hf} &= 
 \frac{1}{\Delta x}\left(\frac{\Delii{r+1}{(r+1)+p+1}}{\Delii{-\hf}{\thf}}(-1)^{r+p+\hf} + \frac{\Delii{r}{r+p+1}}{\Delii{-\hf}{\thf}}(-1)^{r+p-\hf}\right) \\
&\leq \frac{\Cu{r+1}{p}+\Cu{r}{p}}{\Delta x} \leq \Cu{r}{p+1}, \qquad r = r_p, \dots, s_p-1.
\end{split}
\end{equation}
We turn to the remaining cases. 
\begin{itemize}
\item[(a)] As in Lemma \ref{thm:ddsign}, if $r_{p+1} = r_p$ then the induction step is already covered in (\ref{eq:alreadyii}), so the only remaining case is $r=r_{p+1}=r_p-1$, corresponding to Lemma \ref{thm:ddsign}(a).
In this case, $|\Del{r}{p+1}| \leq |\Delii{r+1}{r+p+2}|$, hence
\begin{align*}
\frac{\Del{r}{p+2}}{\Delii{-\hf}{\thf}}(-1)^{r+p+\hf} & = \frac{1}{\Delta x}\frac{\Delii{r+1}{r+p+2} - \Del{r}{p+1}}{\Delii{-\hf}{\thf}}(-1)^{r+p+\hf} \\
& \leq \frac{2}{\Delta x}\frac{\Delii{r+1}{r+p+2}}{\Delii{-\hf}{\thf}}(-1)^{r+p+\hf} 
\leq \frac{2\Cu{r+1}{p}}{\Delta x} \leq \Cu{r}{p+1}.
\end{align*}

\item[(b)] If $s_{p+1} = s_p-1$ then the induction step is already covered in (\ref{eq:alreadyii}), so the only remaining case is $r=s_{p+1}=s_p$, corresponding to Lemma \ref{thm:ddsign}(b).
In this case, $|\Delii{r+1}{r+p+2}| \leq |\Del{r}{p+1}|$, hence
\begin{align*}
\frac{\Del{r}{p+2}}{\Delii{-\hf}{\thf}}(-1)^{r+p+\hf} &= \frac{1}{\Delta x}\frac{\Delii{r+1}{r+p+2} - \Del{r}{p+1}}{\Delii{-\hf}{\thf}}(-1)^{r+p+\hf} \\
&\leq \frac{2}{\Delta x}\frac{\Del{r}{p+1}}{\Delii{-\hf}{\thf}}(-1)^{r+p-\hf}
\leq \frac{2\Cu{r}{p}}{\Delta x} \leq \Cu{r}{p+1}.
\end{align*}
\end{itemize}
\end{proof}

Using the explicit form \eqref{eq:sumexpr} of the jump $\vplus-\vminus$, we get the following explicit expression of the upper-bound asserted in \eqref{eq:jmpub}.
\begin{theorem}\label{thm:jmppr}
Let $\vplus$ and $\vminus$ be the point-values reconstructed by the ENO algorithm \ref{alg:eno} at the cell interface $x=x_{\hf}+$ and, respectively, $x=x_{\hf}-$.  Then
\begin{align*}
\frac{\vplus-\vminus}{\overline{v}_1-\overline{v}_0} 
\leq {\sf C}_p := \frac{1}{x_\thf-x_{-\hf}}\sum_{r=-p+\hf}^{-\hf}\Cu{r}{p} \left|(x_{r+p+1} - x_r)\prodii{m=0}{p-1}\left(x_\hf-x_{r+m+1}\right)\right|.
\end{align*}
\end{theorem}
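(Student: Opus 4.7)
The plan is to combine the three ingredients already established: the exact formula for the interface jump \eqref{eq:sumexpr} (Lemma \ref{thm:sumexpr}), the termwise sign agreement encoded in \eqref{eq:signa} together with Lemma \ref{thm:ddsign}, and the divided-difference upper bound from Lemma \ref{thm:ddbound}. There are no genuinely hard steps here; the content of the theorem is a clean bookkeeping consequence of these three ingredients.

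First, I would rewrite $\overline{v}_1-\overline{v}_0$ in divided-difference form. From the definition \eqref{eq:primitive} of the primitive one checks that
\[
\Delii{-\hf}{\thf} \;=\; V[x_{-\hf},x_\hf,x_\thf] \;=\; \frac{\overline{v}_1-\overline{v}_0}{x_\thf-x_{-\hf}},
\]
so that $\overline{v}_1-\overline{v}_0 = (x_\thf-x_{-\hf})\,\Delii{-\hf}{\thf}$. Dividing the identity \eqref{eq:sumexpr} by this expression yields
\[
\frac{\vplus-\vminus}{\overline{v}_1-\overline{v}_0} \;=\; \frac{1}{x_\thf-x_{-\hf}}\sum_{r=r_p}^{s_p}\frac{\Del{r}{p+1}}{\Delii{-\hf}{\thf}}\,(x_{r+p+1}-x_r)\prodii{m=0}{p-1}\left(x_\hf-x_{r+m+1}\right).
\]

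Next I would observe that every summand is nonnegative. Indeed, the sign identity \eqref{eq:signa} gives
\[
\sgn\!\left((x_{r+p+1}-x_r)\prodii{m=0}{p-1}\left(x_\hf-x_{r+m+1}\right)\right) \;=\; (-1)^{r+p-\hf},
\]
while Lemma \ref{thm:ddsign} ensures that $\Del{r}{p+1}/\Delii{-\hf}{\thf}$ carries exactly the same sign $(-1)^{r+p-\hf}$ for each $r\in\{r_p,\dots,s_p\}$. Consequently each term equals its absolute value:
\[
\frac{\vplus-\vminus}{\overline{v}_1-\overline{v}_0} \;=\; \frac{1}{x_\thf-x_{-\hf}}\sum_{r=r_p}^{s_p}\frac{\Del{r}{p+1}}{\Delii{-\hf}{\thf}}(-1)^{r+p-\hf}\,\left|(x_{r+p+1}-x_r)\prodii{m=0}{p-1}\left(x_\hf-x_{r+m+1}\right)\right|.
\]

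Finally, Lemma \ref{thm:ddbound} provides the pointwise upper bound $\frac{\Del{r}{p+1}}{\Delii{-\hf}{\thf}}(-1)^{r+p-\hf}\leq\Cu{r}{p}$ on exactly the range $r=r_p,\dots,s_p$. Substituting this bound and then enlarging the summation to the full admissible range $r=-p+\hf,\dots,-\hf$ (which is allowed because all summands as well as all $\Cu{r}{p}$ are nonnegative) gives
\[
\frac{\vplus-\vminus}{\overline{v}_1-\overline{v}_0} \;\leq\; \frac{1}{x_\thf-x_{-\hf}}\sum_{r=-p+\hf}^{-\hf}\Cu{r}{p}\left|(x_{r+p+1}-x_r)\prodii{m=0}{p-1}\left(x_\hf-x_{r+m+1}\right)\right|,
\]
which is the claim. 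The only mildly delicate step is the second one, where one must invoke both \eqref{eq:signa} and Lemma \ref{thm:ddsign} simultaneously to justify dropping the signs; everything else is direct substitution. Note also that the resulting constant ${\sf C}_p$ depends only on $p$ and on the neighboring mesh ratios, since the same is true of $\Cu{r}{p}$ by its recursive definition \eqref{eq:uboundb}, and the factors $(x_{r+p+1}-x_r)(x_\hf-x_{r+m+1})$ scale in the same way as $x_\thf-x_{-\hf}$.
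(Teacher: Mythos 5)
Your proof is correct and follows essentially the same route as the paper's: divide the jump identity \eqref{eq:sumexpr} by $\overline{v}_1-\overline{v}_0=(x_\thf-x_{-\hf})\Delii{-\hf}{\thf}$, use \eqref{eq:signa} together with Lemma \ref{thm:ddsign} to rewrite each summand as $\frac{\Del{r}{p+1}}{\Delii{-\hf}{\thf}}(-1)^{r+p-\hf}$ times an absolute value, apply Lemma \ref{thm:ddbound}, and enlarge the index range by nonnegativity. (Incidentally, your relation $\Delii{-\hf}{\thf}=(\overline{v}_1-\overline{v}_0)/(x_\thf-x_{-\hf})$ is the correct one; the paper's earlier inline formula writing this as a product is a typo, and the paper's own proof tacitly uses the quotient form exactly as you do.)
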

\begin{proof}
Let $r_p, s_p$ be the offsets of the ENO stencils associated with cell $\cell_0$ and, respectively, $\cell_1$. By Lemmas \ref{thm:sumexpr} and \ref{thm:ddbound}, we have
\begin{align*}
\frac{\vplus-\vminus}{\overline{v}_1-\overline{v}_0} &= \frac{1}{x_\thf-x_{-\hf}}\sum_{r=r_p}^{s_p}\frac{\Del{r}{p+1}}{\Delii{-\hf}{\thf}}(-1)^{r+p-\hf}\left|(x_{r+p+1} - x_r)\prodii{m=0}{p-1}\left(x_\hf-x_{r + m+1}\right)\right| \\
&\leq \frac{1}{x_\thf-x_{-\hf}}\sum_{r=r_p}^{s_p}\Cu{r}{p}\left|(x_{r+p+1} - x)\prodii{m=0}{p-1}\left(x_\hf-x_{r+m+1}\right)\right| \\
&\leq \frac{1}{x_\thf-x_{-\hf}}\sum_{r=-p+\hf}^{-\hf}\Cu{r}{p}\left|(x_{r+p+1} - x_r)\prodii{m=0}{p-1}\left(x_\hf-x_{r+m+1}\right)\right|.
\end{align*}
\end{proof}

When the mesh is uniform, $|\cell_i| \equiv h$, the expression for the upper bound ${\sf C}$ can be calculated explicitly. The recursion relation \eqref{eq:uboundb} yields
$\displaystyle \Cu{r}{p} = \frac{2^p}{h^{p-1}(p+1)!}$, and the coefficient of the $(p+1)$-th order divided differences in \eqref{eq:sumexpr} is
\[
\left|(x_{r+p+1} - x_r)\prodii{m=0}{p-1}\left(x_\hf-x_{r+m+1}\right)\right| = h^{p}(p+1) (-r-\hf)!(p+r-\hf)!.
\]
Thus, we arrive at the following bound on the jump in reconstructed values.
\begin{corollary}
Let $\vplus$ and $\vminus$ be the pointvalues reconstructed at the cell interface $x=x_{\hf}+$ and, respectively, $x=x_{\hf}-$, by the $p$-th order ENO Algorithm \ref{alg:eno}, based on equi-spaced cell averages $\{\overline{v}_k\}_k$. Then
\begin{equation}\label{eq:jumpbound}
\begin{split}
\frac{\vplus-\vminus}{\overline{v}_1-\overline{v}_0} \leq {\sf C}_p = 2^{p-1}\frac{1}{p!}\sum_{k=0}^{p-1} k!(p-k-1)!.
\end{split}
\end{equation}
\end{corollary}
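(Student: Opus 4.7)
The plan is to specialize Theorem \ref{thm:jmppr}, which gives the general mesh-dependent bound, to the uniform case $|\cell_i|\equiv h$. Writing $x_k = (k-\hf)h$ (with $x_\hf=0$ as a normalization), the corollary will follow once we (i) evaluate the recursion-defined constants $\Cu{r}{p}$ in closed form, (ii) evaluate the geometric factor $|(x_{r+p+1}-x_r)\prodii{m=0}{p-1}(x_\hf - x_{r+m+1})|$ in closed form, and (iii) reindex the resulting sum via $k = -r-\hf$.

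For step (i), I would prove by induction on $p$ that $\Cu{r}{p} = \dfrac{2^p}{h^{p-1}(p+1)!}$. The base case $p=1$ is just $\Cu{r}{1}=1$. For the induction step, since all $\Cu{r}{p}$ with the same $p$ are equal by the inductive hypothesis, the max in \eqref{eq:uboundb} is trivial, and on a uniform mesh $x_{r+p+2}-x_r = (p+2)h$, so
\[
\Cu{r}{p+1} \;=\; \frac{2}{(p+2)h}\cdot\frac{2^p}{h^{p-1}(p+1)!} \;=\; \frac{2^{p+1}}{h^p(p+2)!},
\]
which closes the induction.

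For step (ii), I would observe that $x_{r+p+1}-x_r = (p+1)h$ and, setting $j := -r-\hf \in \{0,1,\dots,p-1\}$, the factor $x_\hf - x_{r+m+1} = -(m-j)h$ vanishes precisely at $m=j$. The skip-zero product convention $\prodii{}{}$ therefore removes this single factor, leaving $p-1$ nonzero terms whose magnitudes are $|{-}j|,|{-}j+1|,\dots,1,1,2,\dots,p-1-j$. Their product is $j!(p-1-j)!$, so
\[
\left|(x_{r+p+1}-x_r)\prodii{m=0}{p-1}(x_\hf-x_{r+m+1})\right| \;=\; (p+1)\,h^p\,j!(p-1-j)!,
\]
confirming the geometric formula stated before the corollary.

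For step (iii), I would substitute into the bound of Theorem \ref{thm:jmppr}, noting that on a uniform mesh $x_\thf-x_{-\hf}=2h$ (the span of two cells, not one):
\[
{\sf C}_p \;=\; \frac{1}{2h}\sum_{r=-p+\hf}^{-\hf}\frac{2^p}{h^{p-1}(p+1)!}\cdot(p+1)h^p\,(-r-\hf)!(p+r-\hf)!.
\]
The factors of $h$ cancel, $(p+1)/(p+1)! = 1/p!$, and changing variables via $k = -r-\hf$ (so $p+r-\hf = p-k-1$, and $k$ runs over $0,\dots,p-1$) gives exactly
\[
{\sf C}_p \;=\; \frac{2^{p-1}}{p!}\sum_{k=0}^{p-1} k!\,(p-k-1)!.
\]
The only mild subtlety — and thus the main place to be careful — is the factor of $2$ arising from $x_\thf-x_{-\hf}=2h$, which distinguishes $2^{p-1}$ from the naive $2^p$; the rest is bookkeeping in the factorials once the skip-zero product is properly handled.
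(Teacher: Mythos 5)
Your proposal is correct and follows the same route as the paper: specialize Theorem \ref{thm:jmppr} to the uniform mesh, show by induction that $\Cu{r}{p}=2^p/\bigl(h^{p-1}(p+1)!\bigr)$, evaluate the geometric factor as $(p+1)h^{p}(-r-\hf)!\,(p+r-\hf)!$, and divide by $x_\thf-x_{-\hf}=2h$. All three steps, including the factor of $2$ you flag and the reindexing $k=-r-\hf$, match the paper's computation exactly.
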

Table \ref{tab:upperBound} shows the upper bound \eqref{eq:jumpbound} on $(\vplus-\vminus) / (\overline{v}_1-\overline{v}_0)$ for some values of $p$. Returning to Figure \ref{fig:eno}, we see that although the jumps at the cell interfaces can get large, they \emph{cannot} exceed ${\sf C}_p$ times the size of the jump in cell averages, regardless of the values in neighboring cell averages.
\begin{table}[ht]
\begin{center}
\begin{tabular}{|c|c|}
\hline $p$ & Upper bound ${\sf C}_p$ \\
\hline 1 & 1 \\
\hline 2 & 2 \\
\hline 3 & $10/3 = 3.333\dots$ \\
\hline 4 & $16/3 = 5.333\dots$ \\
\hline 5 & $128/15 = 8.533\dots$ \\
\hline 6 & $208/15 = 13.866\dots$ \\
\hline
\end{tabular}
\end{center}
\caption{}
\label{tab:upperBound}
\end{table}

\begin{figure}[h]
\centering
\includegraphics[width=0.4\linewidth]{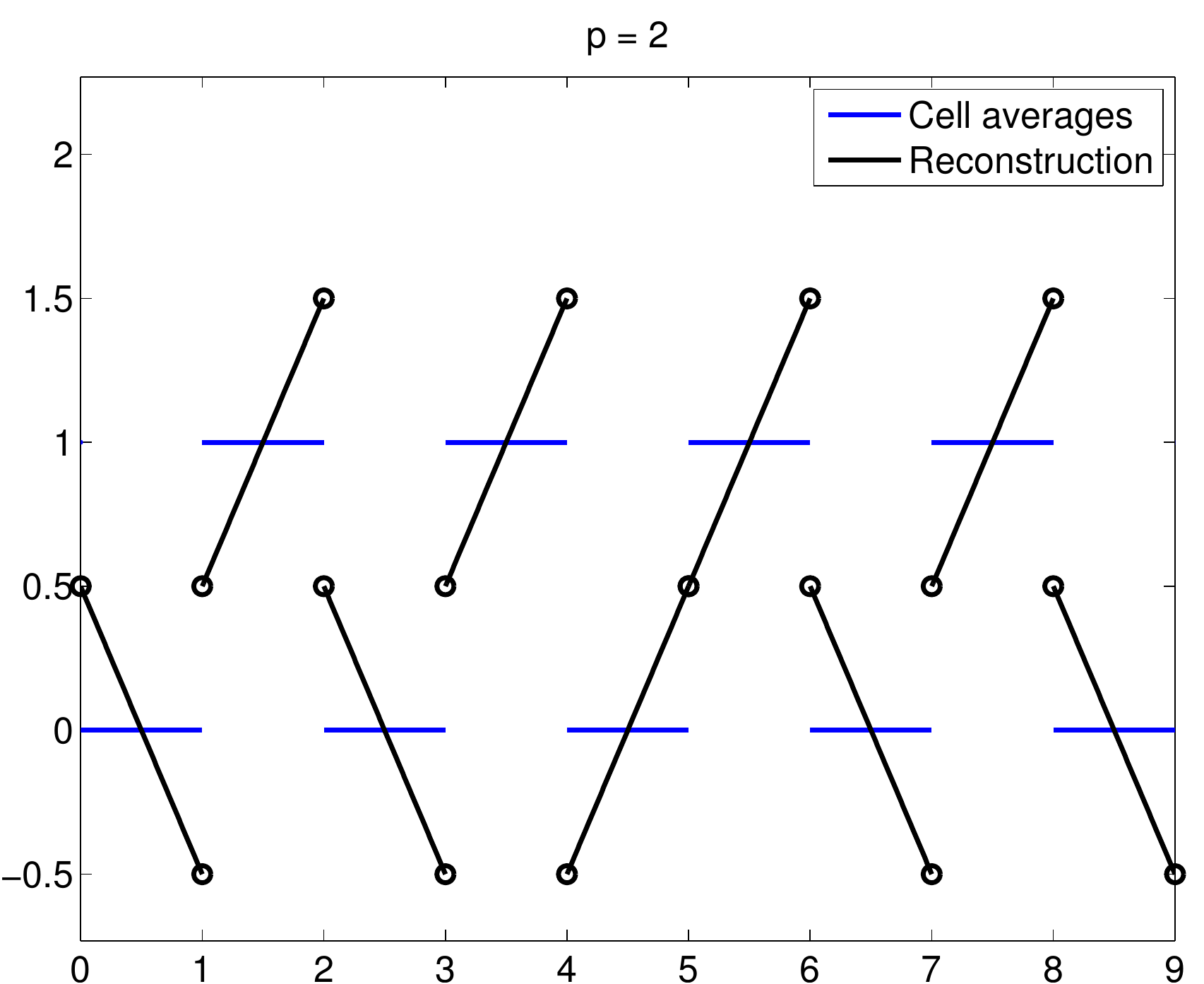}
\includegraphics[width=0.4\linewidth]{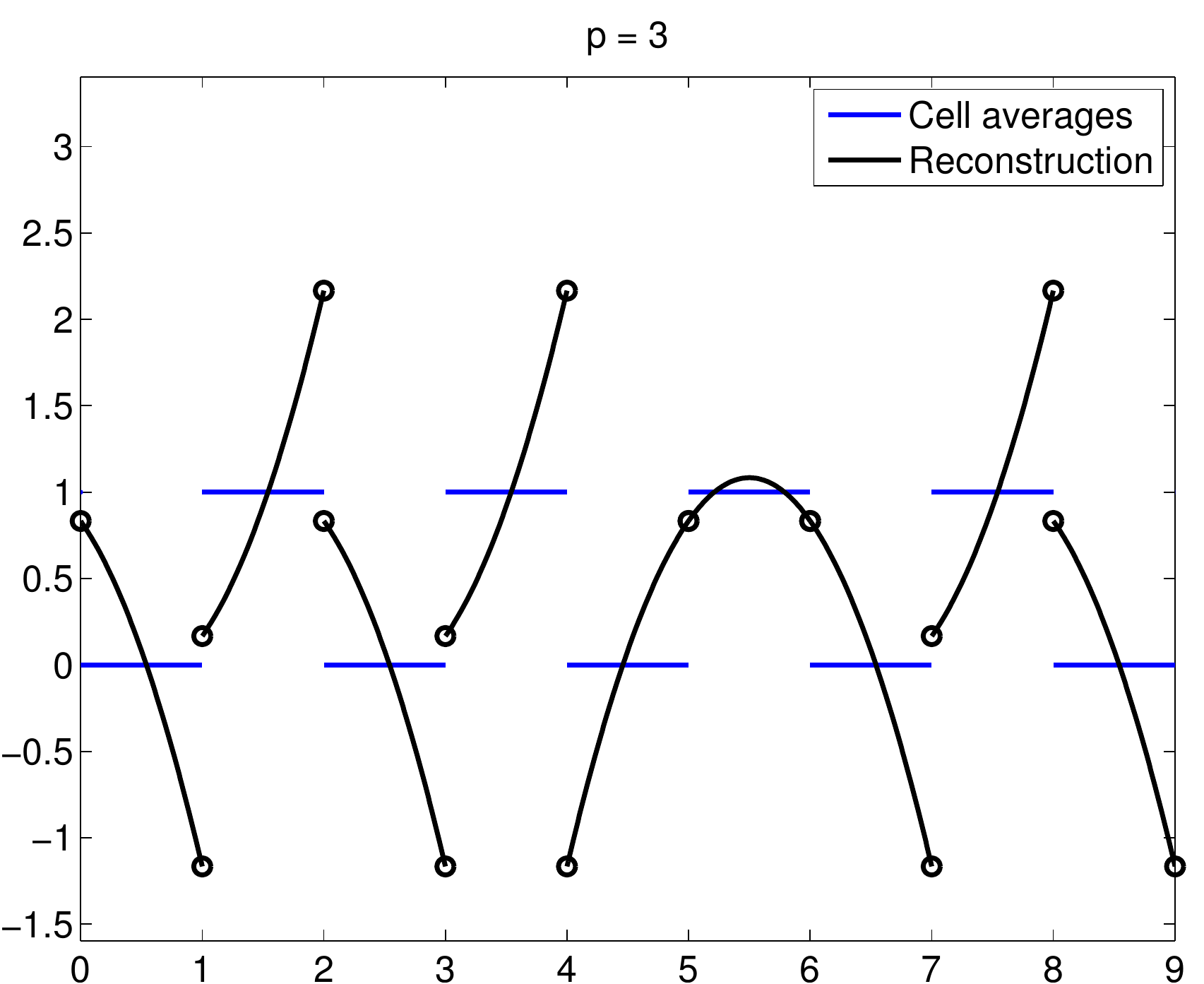}
\includegraphics[width=0.4\linewidth]{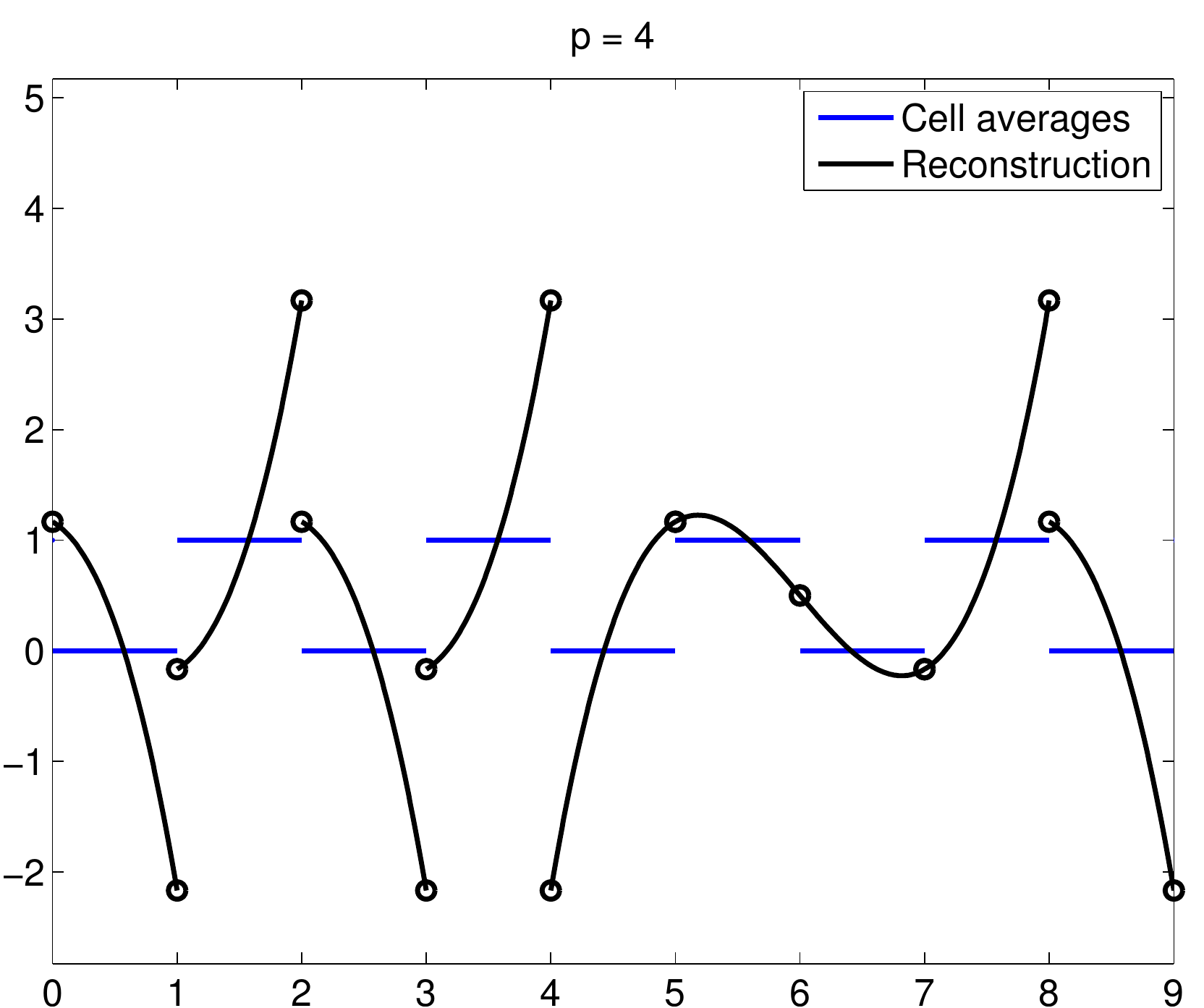}
\includegraphics[width=0.4\linewidth]{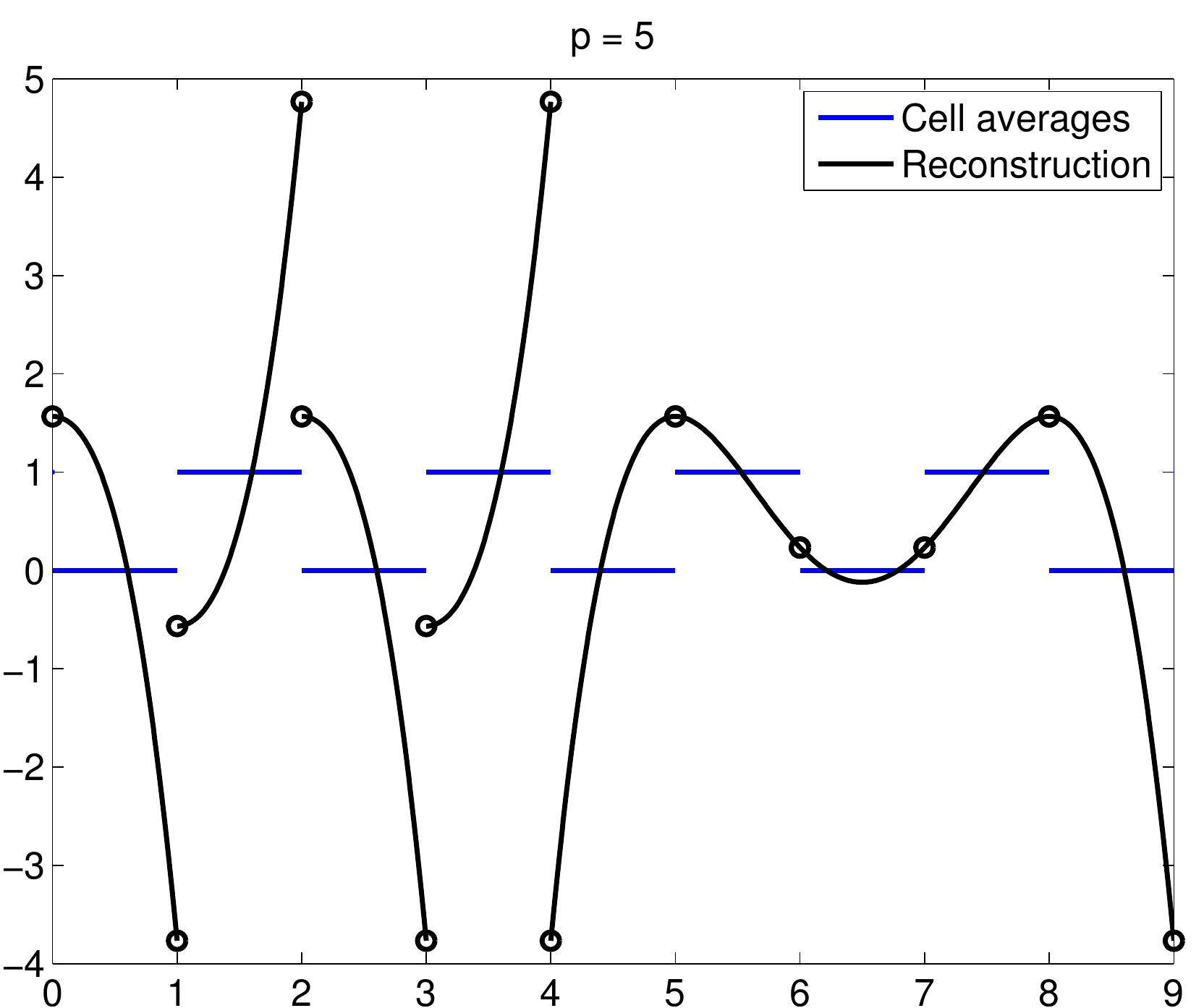}
\caption{Worst case cell interface jumps for $p=2, 3, 4, 5$.}
\label{fig:worstcase}
\end{figure}
It can be shown that the bound ${\sf C}_p$ given in Theorem \ref{thm:jmppr} is sharp. Indeed, the worst-case scenarios for orders of accuracy $p=2, 3, 4, 5$ are shown in Figure \ref{fig:worstcase}. The mesh in this figure is $x_\iphf = i$, and the cell averages are chosen as
$$
\avg{v}_i = \begin{cases} 
0 & \text{if $i$ is odd} \\
1 & \text{if $i$ is even and $i\leq 4$} \\
1-10^{-10} & \text{if $i$ is even and $i > 4$.}
\end{cases}
$$
The number $10^{-10}$ is chosen at random; any small perturbation will give the same effect. This perturbation ensures that cells $\cell_i$ for $i\leq 4$ interpolate over a stencil to the left of the cell interface $x = x_{4+\hf}$, and cells with $i>4$ to the right of it. We see that for each $p$, the jump at $x=4$ is precisely the bound ${\sf C}_p$ as given in Table \ref{tab:upperBound}.

\section{ENO interpolation}\label{sec:fdm}

The stability proof of ENO interpolation stated in Theorem \ref{thm:enosignp} can be argued along the lines of those argued in Sections \ref{sec:sign} and \ref{sec:jumpBound}. We therefore only sketch the arguments without details.

\subsection{The sign property for ENO interpolant}
We focus on the jump across the interface at $x=x_\hf$. Let $\{\rr_j\}_{j=1}^p$ and $\{\rs_j\}_{j=1}^p$ be the ENO stencils associated with gridpoints $x_0$ and, respectively, $x_1$. 
Recall that in this case of ENO interpolation, these offsets are non-positive \emph{integers},
$-p+1 \leq \rr_j,\rs_j \leq 0$.
Our first key step is to compute the jump at the interface point $x_{\hf}$ in the case where the ENO procedure are separated by just one point, namely, $\rr_p=\rs_p$. We let $\del{i}{j}$ abbreviate the divided differences $v[x_i,\dots,x_{i+j}]$. As before, we denote the reconstructed values at the interface $x=x_\hf$ by $\vminus = {\mathcal I}v(x_\hf-)$ and $\vplus={\mathcal I}v(x_\hf+)$.

\begin{lemma}
If $\rr_p = \rs_p = \rr$ for some $\rr\in-\N_0$, then
\[
\vplus-\vminus = \del{\rr}{p+1} (x_{\rr+p+1} - x_{\rr})\prod_{m=1}^{p-1}(x_\hf - x_{\rr+m}).
\]
\end{lemma}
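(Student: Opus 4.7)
The plan is to mirror Step 1 of the proof of Lemma \ref{thm:sumexpr} in Section \ref{sec:sign}, replacing the primitive $V$ by $v$ and the interpolant $F$ by $\v$ itself. The decisive ingredient, as in the reconstruction case, is that Newton's interpolation formula is invariant under permutation of the nodes, because divided differences are symmetric in their arguments.

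\textbf{Step 1 (stencils).} First I would write out the two stencils explicitly. Under the hypothesis $\rr_p = \rs_p = \rr$, Algorithm \ref{alg:eno_interpolant} gives the stencil $\{x_\rr, x_{\rr+1}, \dots, x_{\rr+p-1}\}$ for $\v_0$ and $\{x_{\rr+1}, x_{\rr+2}, \dots, x_{\rr+p}\}$ for $\v_1$. The two share the $p-1$ interior nodes $\{x_{\rr+1}, \dots, x_{\rr+p-1}\}$ and differ only in their outermost node: $\v_0$ uses $x_\rr$, while $\v_1$ uses $x_{\rr+p}$.

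\textbf{Step 2 (matched Newton expansions).} I would then write both interpolants in Newton form with the nodes ordered so as to expose their common structure: for $\v_0$, list the shared nodes $x_{\rr+1}, \dots, x_{\rr+p-1}$ first and append $x_\rr$ last; for $\v_1$, list the shared nodes first and append $x_{\rr+p}$ last. By symmetry of divided differences this rearrangement is legal, and the first $p-1$ Newton summands of the two expansions coincide term by term. Subtraction collapses everything to the top-order contribution,
\[
(\v_1 - \v_0)(x) \;=\; \bigl(v[x_{\rr+1}, \dots, x_{\rr+p}] - v[x_\rr, \dots, x_{\rr+p-1}]\bigr) \prod_{m=1}^{p-1}\bigl(x - x_{\rr+m}\bigr).
\]
The divided-difference recursion combines the bracket into a single higher-order divided difference of $v$ multiplied by $(x_{\rr+p}-x_\rr)$, and evaluating at $x=x_\hf$ produces the claimed identity. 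Note $x_\hf$ is strictly between $x_0$ and $x_1$, hence distinct from each node $x_{\rr+m}$ appearing in the product, so no factor vanishes accidentally.

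\textbf{Main obstacle.} There isn't a deep obstacle: the entire argument is bookkeeping with Newton's form, exactly parallel to Step 1 of Lemma \ref{thm:sumexpr}. The only subtlety is to justify the matched reordering of nodes via symmetry of divided differences and to verify that the lower-order coefficients indeed cancel. Extending this ``one-index-apart'' base case to arbitrary offsets $\rr_p\leq \rs_p$ (not needed for the lemma as stated, but needed in the overall proof of Theorem \ref{thm:enosignp}) would then proceed by the same telescoping as in Step 2 of Lemma \ref{thm:sumexpr}.
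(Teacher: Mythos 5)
Your proposal is correct and is exactly the argument the paper intends: the lemma is stated without proof precisely because it is the interpolation analogue of Step 1 of Lemma \ref{thm:sumexpr}, and your matched Newton expansions (shared nodes first, outermost node last) with the divided-difference recursion is that argument, simplified by the fact that no differentiation is needed since $\v_i$ interpolates $v$ directly. One point you should have flagged rather than glossed over: your (correct) computation yields
\[
\vplus-\vminus \;=\; \del{\rr}{p}\,(x_{\rr+p}-x_\rr)\prod_{m=1}^{p-1}\bigl(x_\hf-x_{\rr+m}\bigr),
\]
i.e.\ a $p$-th order divided difference over the $p+1$ points of the union of the two stencils — not the $\del{\rr}{p+1}\,(x_{\rr+p+1}-x_\rr)$ written in the lemma. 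Since each stencil here has only $p$ points (the interpolant has degree $p-1$), the union cannot support a $(p+1)$-th divided difference; the lemma's indices are an off-by-one carryover from the reconstruction case, where passing to the primitive raises the polynomial degree by one. Your version is the one consistent with the sign computation $\sgn\bigl((x_{\rr+p}-x_\rr)\prod_{m=1}^{p-1}(x_\hf-x_{\rr+m})\bigr)=(-1)^{\rr+p+1}$ and the coefficient bounds used later in Section \ref{sec:fdm}, so you have proved the correct statement; just do not claim it "produces the claimed identity" verbatim.
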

By assembling a telescoping sum of several such stencils we obtain

\begin{corollary}
For general $\rr_p \leq \rs_p$, we have
\begin{equation}\label{eq:enoSumExprFD}
\jumplus{i+\hf} - \juminus{i+\hf} = \sum_{\rr=\rr_p}^{\rs_p} \del{\rr}{p+1} (x_{\rr+p+1} - x_{\rr})\prod_{m=1}^{p-1}(x_\hf - x_{\rr+m}).
\end{equation}
\end{corollary}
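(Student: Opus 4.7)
The plan is to reduce the general statement to the preceding separated-by-one Lemma via a telescoping sum, in direct parallel with Step~2 of the proof of Lemma \ref{thm:sumexpr} in Section \ref{sec:sign}. Let $f^{\{\rr\}}$ denote the unique interpolating polynomial of degree $p-1$ based on the $p$ consecutive point-values $\{v_\rr, v_{\rr+1}, \dots, v_{\rr+p-1}\}$. Reading Algorithm \ref{alg:eno_interpolant} at the two neighboring gridpoints $x_0$ and $x_1$, the ENO interpolants on cells $\cell_0$ and $\cell_1$ are
\[
\v_0 = f^{\{\rr_p\}}, \qquad \v_1 = f^{\{\rs_p+1\}},
\]
the ``$+1$'' arising because the stencil $\{v_{1+k}\}_{k=\rs_p}^{\rs_p+p-1}$ based at $x_1$ begins at index $\rs_p+1$.

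Next I would write the interface jump as a telescoping sum over the intermediate one-cell-shifted stencils,
\[
\jumplus{i+\hf}-\juminus{i+\hf} = f^{\{\rs_p+1\}}(x_\hf) - f^{\{\rr_p\}}(x_\hf) = \sum_{\rr=\rr_p}^{\rs_p}\left(f^{\{\rr+1\}}(x_\hf)-f^{\{\rr\}}(x_\hf)\right).
\]
Each summand is exactly of the ``separated by one point'' form treated by the preceding Lemma. Crucially, the Lemma's conclusion is a polynomial identity depending only on the geometry of the two stencils and not on the ENO selection itself, so it may be applied to every consecutive pair $f^{\{\rr\}}, f^{\{\rr+1\}}$ in the range $\rr = \rr_p,\dots,\rs_p$, even though only the endpoints correspond to actual ENO choices. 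Substituting the Lemma termwise gives
\[
f^{\{\rr+1\}}(x_\hf)-f^{\{\rr\}}(x_\hf) = \del{\rr}{p+1}(x_{\rr+p+1}-x_\rr)\prod_{m=1}^{p-1}(x_\hf-x_{\rr+m}),
\]
and the result \eqref{eq:enoSumExprFD} drops out immediately.

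There is essentially no obstacle left to overcome at this stage: the analytic content has already been absorbed into the separated-by-one Lemma, and the Corollary is just its global, telescoped form. The only points requiring attention are the index bookkeeping between neighboring cells (the shift $\rs_p \mapsto \rs_p+1$ coming from the translation of the stencil base point from $x_0$ to $x_1$) and the observation that the single-step identity is a purely structural polynomial identity rather than one special to ENO-selected stencils. Both points are routine and require no further estimates.
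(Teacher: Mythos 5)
Your proposal is correct and follows exactly the route the paper takes: the paper obtains this corollary ``by assembling a telescoping sum of several such stencils,'' i.e.\ by writing $f^{\{\rs_p+1\}}(x_\hf)-f^{\{\rr_p\}}(x_\hf)$ as a sum of one-cell-shifted differences and applying the preceding lemma termwise, mirroring Step~2 of the proof of Lemma~\ref{thm:sumexpr}. Your observation that the single-step identity is a purely structural polynomial fact, valid for the intermediate non-ENO stencils as well, is precisely the point the paper also emphasizes.
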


Since $v_{1}-v_0 = (x_{1}-x_0)\delii{0}{1}$, we wish to show that the jump in reconstructed values at the cell interface has the same sign as $\delii{0}{1}$.
To this end we show that each summand in \eqref{eq:enoSumExprFD} has the same sign as $\delii{0}{1}$. Indeed, since
\[
\sgn\left((x_{\rr+p} - x_{\rr})\prod_{m=1}^{p-1}(x_\hf - x_{\rr+m})\right) = (-1)^{\rr+p+1}, \qquad 
-p+1 \leq \rr \leq 0,
\]
it suffices to prove the following:
\begin{lemma}\label{lem:signPropFD}
If $\rr_p, \rs_p$ are selected according to the ENO stencil selection procedure, then
\[
\begin{cases}
\ {\rm if} \quad \delii{0}{1}\geq 0 & {\rm then} \quad (-1)^{\rr+p+1}\del{\rr}{p+1}\geq 0;\\
\ {\rm if} \quad \delii{0}{1}\leq 0 & {\rm then} \quad (-1)^{\rr+p+1}\del{\rr}{p+1}\leq 0,
\end{cases}
\quad \rr = \rr_p, \dots, \rs_p.
\]
\end{lemma}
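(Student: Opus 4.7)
The plan is to prove the lemma by induction on $p$, paralleling the argument used for Lemma~\ref{thm:ddsign} in Section~\ref{sec:sign} almost verbatim. The only cosmetic differences are that divided differences of the primitive $V$ are replaced by divided differences of $v$ itself, and that half-integer offsets are replaced by non-positive integer offsets.

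For the base case I would verify the claim for the smallest admissible $p$ by direct inspection: since $\rr_1 = \rs_1 = 0$, the signed divided difference on the left-hand side collapses to a quantity immediately tied to $\delii{0}{1}$ via the definition of divided differences. For the inductive step, assuming that the sign property holds for some $p\geq 1$, I would invoke the recursive identity
\[
\del{\rr}{p+2} = \frac{\del{\rr+1}{p+1} - \del{\rr}{p+1}}{x_{\rr+p+2} - x_\rr}
\]
and treat three ranges of $\rr$ separately, exactly as in the proof of Lemma~\ref{thm:ddsign}. For $\rr$ in the ``interior'' range $\rr_p \leq \rr \leq \rs_p - 1$, both divided differences in the numerator are covered by the induction hypothesis and enter with opposite parity signs, so their signed difference automatically has the required sign.

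The two remaining cases are the new leftmost index $\rr = \rr_{p+1} = \rr_p - 1$ (when the stencil at $x_0$ extends further to the left) and the new rightmost index $\rr = \rs_{p+1} = \rs_p$ (when the stencil at $x_1$ extends further to the right). In both cases, only one of the two divided differences in the numerator is covered by the induction hypothesis, but the ENO selection criterion from Algorithm~\ref{alg:eno_interpolant} supplies exactly the missing bound: the left-extension choice guarantees $|\del{\rr_p-1}{p+1}| < |\del{\rr_p}{p+1}|$, and the right-extension choice guarantees $|\del{\rs_p+1}{p+1}| \leq |\del{\rs_p}{p+1}|$. Plugging either inequality into the recursion alongside the induction hypothesis applied to the ``anchor'' divided difference ($\del{\rr_p}{p+1}$ or $\del{\rs_p}{p+1}$, respectively) forces the signed numerator to have the required sign, completing the induction.

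The main obstacle will be purely bookkeeping: carefully tracking the parity alternation $(-1)^{\rr+p+1}$ as $p$ increases and as $\rr_p, \rs_p$ shift between levels, and ensuring that the ENO selection inequalities bound precisely the divided differences appearing in the recursion. Once the sign property is in hand, the upper bound~\eqref{eq:jmpubPW} asserted in Theorem~\ref{thm:enosignp} follows by a parallel adaptation of Lemma~\ref{thm:ddbound} and Theorem~\ref{thm:jmppr}, with constants $\cu{\rr}{p}$ defined recursively in terms of the local mesh ratios $|x_{j+1}-x_j|/|x_j - x_{j-1}|$.
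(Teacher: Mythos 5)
Your overall strategy is exactly the one the paper intends: Section \ref{sec:fdm} gives no independent proof of this lemma, only the remark that it can be argued along the lines of Sections \ref{sec:sign} and \ref{sec:jumpBound}, and your induction on $p$ --- the interior range $\rr_p\le\rr\le\rs_p-1$ closed by the induction hypothesis together with the divided-difference recursion, the two boundary offsets closed by the ENO selection inequalities --- is a faithful transcription of the proof of Lemma \ref{thm:ddsign}.

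There is, however, one concrete point where the bookkeeping fails if carried out literally, and it is precisely the point you flagged as the main obstacle. In Algorithm \ref{alg:eno_interpolant} the stage-$j$ test compares \emph{$j$-th} divided differences of $v$ (two candidate $(j{+}1)$-point stencils), whereas in Algorithm \ref{alg:eno} the stage-$j$ test compares \emph{$(j{+}1)$-th} divided differences of $V$. Consequently the decision fixing $\rr_{p+1}$ relative to $\rr_p$ supplies $|\del{\rr_p-1}{p}| < |\del{\rr_p}{p}|$, and the decision fixing $\rs_{p+1}=\rs_p$ supplies $|\del{\rs_p+1}{p}| \le |\del{\rs_p}{p}|$ --- one order lower than the inequalities you quote. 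The induction therefore closes for the quantities $(-1)^{\rr+p+1}\del{\rr}{p}$, $\rr=\rr_p,\dots,\rs_p$; note that the base case $p=1$ then reads $(-1)^{2}\delii{0}{1}\ge 0$, whereas with $\del{\rr}{p+1}$ it would assert a sign for $v[x_0,x_1,x_2]$, which is false in general. This one-unit shift is also what the jump formula actually requires: since the interpolants here have degree $p-1$ on $p$-point stencils, the single-shift difference $f^{\{\rr+1\}}-f^{\{\rr\}}$ equals $\del{\rr}{p}\,(x_{\rr+p}-x_\rr)\prod_{m=1}^{p-1}(x-x_{\rr+m})$, so \eqref{eq:enoSumExprFD} and the present lemma should carry $\del{\rr}{p}$ rather than $\del{\rr}{p+1}$ (the paper's own sign computation, with its factor $(x_{\rr+p}-x_\rr)$, already reflects this corrected indexing). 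With that shift made uniformly in the statement and in your two boundary cases, your argument goes through verbatim.
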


\subsection{Upper bounds on the  relative jumps for ENO interpolant}
Next, we show the corresponding upper bound on $\vplus-\vminus$ for ENO reconstruction with point values.

\begin{lemma}
If $\rr_p, \rs_p$ are selected according to the ENO stencil selection procedure, then
\[
0\leq \frac{\del{\rr}{p+1}}{\delii{0}{1}}(-1)^{\rr+p+1} \leq \cu{r}{p} \qquad \rr = \rr_p, \dots, \rs_p,
\]
where $\cu{r}{p}$ are defined recursively, starting with $\cu{r}{1}=1$, and
\[
\cu{\rr}{p+1} = 
\frac{2}{x_{\rr+p} - x_{\rr}} \max(\cu{\rr}{p} , \cu{\rr+1}{p}).
\]
\end{lemma}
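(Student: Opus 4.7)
The plan is to mirror the induction on $p$ carried out in the proof of Lemma \ref{thm:ddbound}, transferring the argument from divided differences of the primitive $V$ to divided differences of $v$, and from the reference second-order divided difference $\Delii{-\hf}{\thf}$ to the first-order divided difference $\delii{0}{1}=(v_1-v_0)/(x_1-x_0)$. The non-negativity $0 \leq (-1)^{\rr+p+1}\del{\rr}{p+1}/\delii{0}{1}$ is already supplied by Lemma \ref{lem:signPropFD}, so only the upper bound is at issue.

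In the base case $p=1$ the range of offsets reduces to the single value $\rr = \rr_1 = \rs_1 = 0$ and the bound $c_{\rr,1}=1$ is immediate from Lemma \ref{lem:signPropFD} together with the stage-1 ENO criterion. For the induction step $p \mapsto p+1$, I would invoke the standard recursion
\[
\del{\rr}{p+2} \;=\; \frac{\delii{\rr+1}{\rr+p+2} - \del{\rr}{p+1}}{x_{\rr+p+2}-x_\rr},
\]
divide through by $\delii{0}{1}$, and multiply by $(-1)^{\rr+p+2}$. Using the parity identities $(-1)^{\rr+p+2} = -(-1)^{\rr+p+1}$ and $(-1)^{(\rr+1)+p+1} = -(-1)^{\rr+p+1}$, the right-hand side becomes a sum of two non-negative terms of exactly the form controlled by the induction hypothesis at offsets $\rr$ and $\rr+1$. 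For interior offsets $\rr_p \leq \rr \leq \rs_p-1$, both sub-offsets fall inside the range guaranteed by the induction hypothesis and are bounded by $c_{\rr,p}$ and $c_{\rr+1,p}$ respectively, yielding $(c_{\rr,p}+c_{\rr+1,p})/|x_{\rr+p+2}-x_\rr| \leq 2\max(c_{\rr,p},c_{\rr+1,p})/|x_{\rr+p+2}-x_\rr| = c_{\rr,p+1}$.

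The technical heart of the argument lies in the two boundary offsets $\rr = \rr_{p+1}=\rr_p-1$ and $\rr = \rs_{p+1}=\rs_p$, where only one of the two smaller-order divided differences is controlled by the induction hypothesis. Here the ENO selection criterion of Algorithm \ref{alg:eno_interpolant} does the heavy lifting: when the stencil is extended to the left at stage $p+1$, the algorithm guarantees $|\del{\rr}{p+1}| \leq |\delii{\rr+1}{\rr+p+2}|$, so the uncontrolled summand is dominated by twice the controlled one and the ratio is bounded by $2c_{\rr+1,p}/|x_{\rr+p+2}-x_\rr|$. The right boundary is argued symmetrically using $|\delii{\rr+1}{\rr+p+2}| \leq |\del{\rr}{p+1}|$, giving $2c_{\rr,p}/|x_{\rr+p+2}-x_\rr|$. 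In both boundary cases the resulting bound exactly matches the recursive definition of $c_{\rr,p+1}$.

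The main obstacle is purely the bookkeeping of signs and index shifts: tracking the parity of $(-1)^{\rr+p+1}$ as $\rr$ shifts by $\pm 1$ and $p$ advances, and verifying that the endpoints $\rr_{p+1}$ and $\rs_{p+1}$ are correctly distinguished from interior offsets so that each is handled by precisely one branch of the argument and no term is double-counted. Once these index alignments are in place, the proof is a transparent line-by-line translation of the proof of Lemma \ref{thm:ddbound} to the point-value setting.
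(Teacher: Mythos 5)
Your proposal is correct and takes precisely the route the paper intends: the paper omits the proof of this lemma altogether (Section 4 states only that the arguments run ``along the lines of'' Sections 2--3), and your induction is a faithful transcription of the proof of Lemma \ref{thm:ddbound} --- base case, the divided-difference recursion for interior offsets bounded by the two induction-hypothesis terms, and the two boundary offsets handled by the ENO selection inequalities, with Lemma \ref{lem:signPropFD} supplying the lower bound. The only quibble is a phrasing slip in the left-boundary case (the numerator is at most twice the \emph{controlled} term because the uncontrolled one is dominated by it in absolute value, not ``dominated by twice'' it), but the bound you end up with is the correct one.
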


\begin{table}[ht]
\begin{center}
\begin{tabular}{|c|c|}
\hline $p$ & Upper bound ${\sf c}_p$ \\
\hline 1 & 1 \\
\hline 2 & 2 \\
\hline 3 & 3.5 \\
\hline 4 & 6 \\
\hline 5 & 10.375 \\
\hline 6 & 18.25 \\
\hline
\end{tabular}
\end{center}
\caption{}
\label{tab:upperBoundFD}
\end{table}%

For simplicity we assume that the mesh is uniform with mesh width $x_{j+1}-x_j \equiv h$. It is straightforward to show that 
$\cu{\rr}{p} \equiv \left({2}/{h}\right)^{p-1}{1}/{p!}$.
Moreover, the coefficient of the $(p+1)$-th order divided differences in \eqref{eq:enoSumExprFD} is
\[
\left|(x_{\rr+p} - x_{\rr})\prod_{m=1}^{p-1}(x_\hf - x_{\rr+m})\right| = h^{p}p\left|\prod_{m=1}^{p-1}(\hf-\rr-m)\right|.
\]
Thus, we arrive at the following bound on the jump in reconstructed values.
\begin{theorem}
Let $\rr_p, \rs_p$ be selected according to the ENO stencil selection procedure, and assume that the mesh is uniform. Then
\begin{align*}
\frac{\vplus-\vminus}{v_1-v_0} \leq {\sf c}_p := 2^{p-1}\frac{1}{(p-1)!}\sum_{\rr=0}^{p-1} \left|\prod_{m=1}^{p-1}(\hf-\rr-m)\right|.
\end{align*}
\end{theorem}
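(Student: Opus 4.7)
The plan is to follow closely the proof of Theorem \ref{thm:jmppr}, combining the three preceding lemmas. I would start from the telescoping identity \eqref{eq:enoSumExprFD}, apply the divided-difference upper bound termwise, and enlarge the summation range to all admissible offsets $\rr \in \{-(p-1),\dots,0\}$, which is legitimate because each term is nonnegative once signs are extracted.

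In more detail, the sign lemma guarantees that $(-1)^{\rr+p+1}\del{\rr}{p+1}$ has the same sign as $\delii{0}{1}$, while an inspection of the distance product shows that $(x_{\rr+p+1}-x_\rr)\prod_{m=1}^{p-1}(x_\hf-x_{\rr+m})$ carries the complementary sign $(-1)^{p+\rr+1}$: on a uniform mesh, $x_\hf - x_{\rr+m} = h(\tfrac12 - \rr - m)$ flips sign precisely as $\rr+m$ crosses zero, and counting yields the stated parity. Multiplying, every summand in \eqref{eq:enoSumExprFD} has the same sign as $\delii{0}{1}$, and its magnitude is bounded above by $|\delii{0}{1}|\,\cu{\rr}{p}\,\bigl|(x_{\rr+p+1}-x_\rr)\prod_{m=1}^{p-1}(x_\hf-x_{\rr+m})\bigr|$. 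Since all terms are nonnegative after sign extraction, enlarging the range from $\{\rr_p,\dots,\rs_p\}$ to the full admissible range $\{-(p-1),\dots,0\}$ only strengthens the upper bound.

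Dividing by $v_1-v_0 = h\,\delii{0}{1}$ and substituting the uniform-mesh values $\cu{\rr}{p} = (2/h)^{p-1}/p!$ together with the interface-distance evaluation quoted just before the theorem, all powers of $h$ cancel neatly, leaving the prefactor $\frac{2^{p-1}}{(p-1)!}$ times the sum of products $\bigl|\prod_{m=1}^{p-1}(\tfrac12-\rr-m)\bigr|$ over the $p$ admissible values of $\rr$. A relabeling $\rr \mapsto -\rr$ then matches the sum to the form stated in the theorem.

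The argument is essentially mechanical once the divided-difference bound of the previous subsection is in hand; the main thing to manage is the sign accounting and the cancellation of $h$ powers. Since the theorem presents ${\sf c}_p$ as an explicit finite sum rather than demanding a closed form, no combinatorial identity needs to be derived. The only non-routine step is verifying term-by-term sign matching so that enlarging the summation range is a legitimate over-estimate, and this is precisely what the sign lemma delivers.
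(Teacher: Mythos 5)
Your proposal is correct and follows essentially the same route as the paper: the telescoping identity \eqref{eq:enoSumExprFD}, the sign lemma and the recursive divided-difference bound applied termwise, enlargement of the summation to all admissible offsets (legitimate since every summand is nonnegative after sign extraction), and the uniform-mesh evaluation of $\cu{\rr}{p}$ and of the interface-distance products, with the final relabeling $\rr\mapsto-\rr$. The sign bookkeeping and the cancellation of the powers of $h$ check out, so nothing further is needed.
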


Table \ref{tab:upperBoundFD} shows the upper bound on $(\vplus-\vminus)/(v_1-v_0)$ for  $p\leq 6$. As for the ENO reconstruction procedure in Section \ref{sec:jumpBound}, it may be shown that these bounds are sharp.

\section{Conclusions}
We show that the ENO reconstruction procedure (from cell averages or point values) is stable via the \emph{sign property}, namely the jump in reconstructed values at each cell interface have the same sign as the jump in the underlying cell averages (point values). Furthermore, we obtain an upper bound on the size of the jump of reconstructed values in terms of the underlying cell averages (point values) at each interface. Both results hold for any mesh $\{x_{\iphf}\}_i$. In particular, the results hold for non-uniform meshes. In addition, both results hold for \emph{any} order of the reconstruction, i.e, any degree for the polynomial interpolation. No extra regularity assumptions on the underlying $L^1_{{\rm loc}}$ function $v$ are needed.

The proof of both the sign property and the upper jump bound depended heavily on the formula \eqref{eq:sumexpr}, which gives the cell interface jump in terms of $r_p$, $s_p$ and the $(p+1)$-th divided differences of $V$. This formula is completely independent of the ENO stencil selection procedure, and hence holds for \emph{all} interpolation stencils. On the other hand, Lemma \ref{thm:ddsign} (and Lemma \ref{thm:ddbound} for the upper bound) is a direct consequence of the ENO stencil procedure. Therefore, we cannot expect that other reconstruction methods satisfies a similar sign property. In particular, the WENO method, using the stencil weights proposed in \cite{JS96,QS02}, will in general \emph{not} satisfy such a property, a fact that is easily confirmed numerically. This leaves open the question of the existence of stencil weights that make the method satisfy the sign property. Of the second-order TVD reconstruction methods (see \cite{Swe84}), only the minmod limiter satisfies the sign property.

The stability estimates presented in this paper do not suffice to conclude that the ENO reconstruction procedure is total variation bounded (TVB). In particular, the jump in the interior of a cell can be large. However, the sign property enables us to construct arbitrarily high-order \emph{entropy stable} schemes for any system of conservation laws. Furthermore, the sign property together with the upper bound allow us to prove that these entropy stable scheme converge for linear equations. Both results are announced in \cite{FMT10} and presented in a forthcoming paper \cite{FMT11}.

\end{document}